\theoremstyle{plain}
\newtheorem{theo}{Theorem}[section]
\newtheorem{cor}[theo]{Corollary}
\newtheorem{prop}[theo]{Proposition}
\theoremstyle{definition}
\newtheorem{defn}[theo]{Definition}
\newtheorem{ex}[theo]{Example}
\title{Multivariate Generalized Splines and Syzygies on Graphs}
\author{Selma Altinok \and Samet Sario\u{g}lan}
\address{Selma Altinok, Hacettepe University Department of Mathematics, 06800 Beytepe Ankara Turkey.}
\email{sbhupal@hacettepe.edu.tr}
\address{Samet Sario\u{g}lan (Corresponding author), Hacettepe University Department of Mathematics, 06800 Beytepe Ankara Turkey.}
\email{ssarioglan@hacettepe.edu.tr}
\begin{document}
\begin{abstract}
Given a graph $G$ whose edges are labeled by ideals of a commutative ring $R$ with identity, a generalized spline is a vertex labeling of $G$ by the elements of $R$ so that the difference of labels on adjacent vertices is an element of the corresponding edge ideal. The set of all generalized splines on a graph $G$ with base ring $R$ has a ring and an $R$-module structure. 

In this paper, we focus on the freeness of generalized spline modules over certain graphs  with the base ring $R = k[x_1 , \ldots , x_d]$ where $k$ is a field. We first show the freeness of generalized spline modules on graphs with no interior edges over $k[x,y]$  such as cycles or a disjoint union of cycles with free edges. Later, we consider graphs that can be  decomposed into disjoint cycles without changing the isomorphism class of the syzygy modules. Then we use this decomposition to show that generalized spline modules are free over $k[x , y]$ and later we extend this result to the base ring $R = k[x_1 , \ldots , x_d]$  under some restrictions. 
\end{abstract}

\maketitle

\section{Introduction}
\label{intro}

The concept of spline has been studied in two major approaches: Classical splines and generalized splines. Classical splines are collections of polynomials defined on the faces of a polyhedral complex that agree to a certain degree of smoothness on the intersection of faces. They are useful tools to control the curvature of objects in industry and have many applications related to numerical analysis, geometric design and solutions of partial differential equations. They form a module. For these and other applications, it is useful to study splines as a module.

Generalized splines are defined on edge labeled graphs. Given a finite graph $G=(V,E)$ and a commutative ring $R$ with identity, an edge labeling function $\alpha$ is a function that labels the edges of $G$ by the ideals of $R$. The pair $(G, \alpha)$ is called an edge labeled graph. A generalized spline on an edge labeled graph $(G, \alpha)$ is a vertex labeling $F \in R ^{|V|}$ such that for each edge $uv \in E$, the difference $f_u - f_v$ is an element of the ideal $\alpha(uv) \in R$ where $f_v$ denotes the label on vertex $v \in V$. The set of all generalized splines on $(G,\alpha)$ with base ring $R$, denoted by $R_{(G,\alpha)}$, has a ring and an $R$-module structure. If $R$ is a multivariate polynomial ring, we call such generalized splines as multivariate generalized splines. We use two notations for generalized splines: the column matrix notation with entries in order from bottom to top and the vector notation. Consider the edge labeled graph $(G,\alpha)$ in Figure~\ref{c3}.

\begin{figure}[H]
\centering
\scalebox{0.16}{\includegraphics{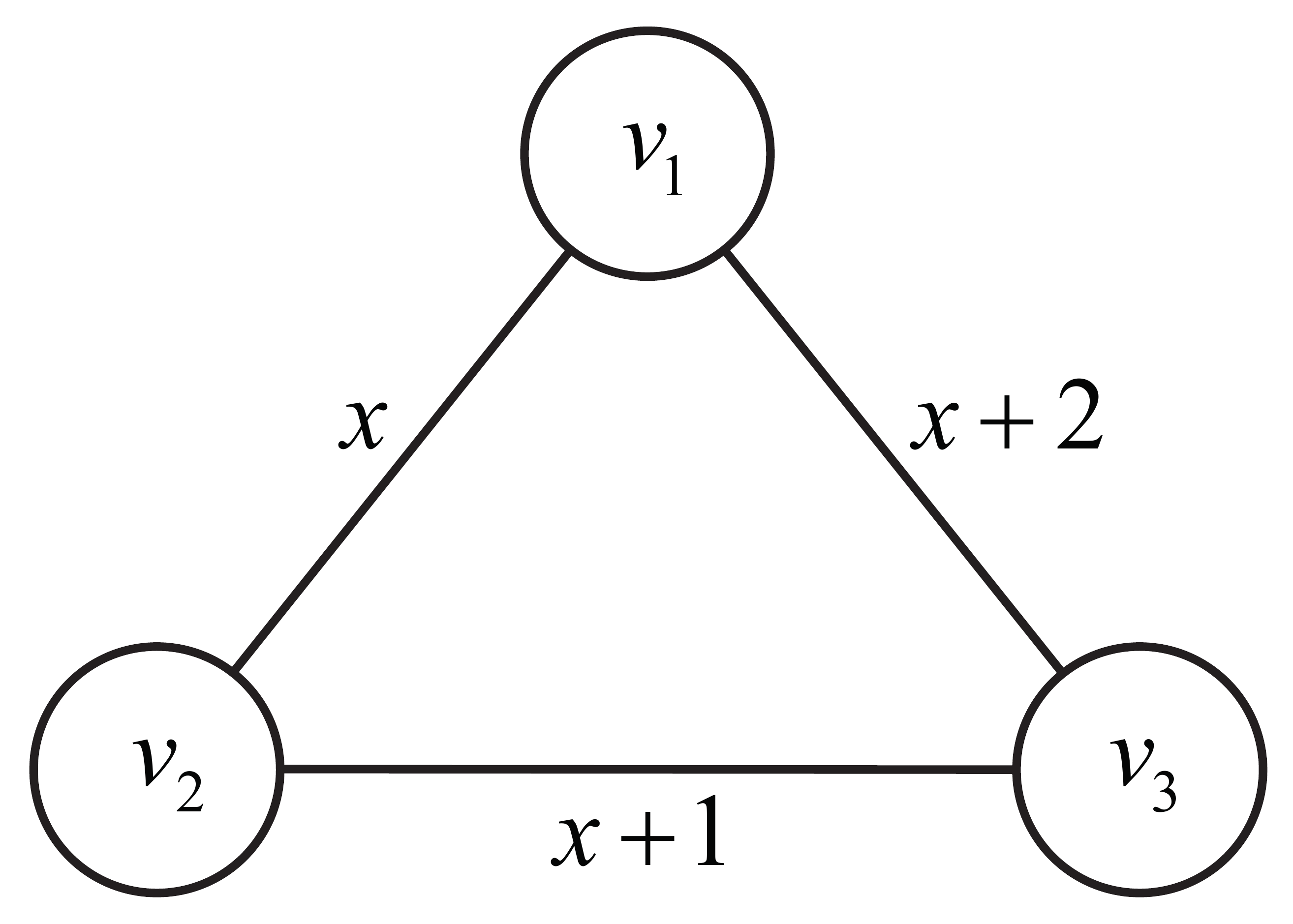}}
\caption{Edge labeled graph $(G,\alpha)$}
\label{c3}
\end{figure}

\noindent
A generalized spline on $(G,\alpha)$ can be presented by $F = \begin{bmatrix} x^2 + 2x + 1 \\ x+1 \\ 1 \end{bmatrix}$ or $F = (1, x+1 , x^2 + 2x + 1)$.

The motivation for studying generalized splines is based on algebraic geometry and topology. Studies in these areas showed that the ring structure of generalized splines corresponds to the equivariant cohomology rings of toric and other algebraic varieties~\cite{Gor, Sch2, Tym2}. Goresky and the others~\cite{Gor} studied the combinatorial structure of the equivariant cohomology ring corresponding to an algebraic variety X with a proper torus action. In this work, they defined the edge labeled graph $(G_X, \alpha)$ corresponds to the algebraic variety $X$ and showed that the equivariant cohomology ring of $X$ agrees with the ring structure of $R_{(G_X, \alpha)}$ where $R$ is a polynomial ring. Thus the algebraic structure of generalized splines over polynomial rings has importance. 

Our interest in the paper is to study the freeness of generalized splines as a module. In~\cite{Ros1}, Rose studied classical splines and observed that the module of splines on a polyhedral complex $\Delta$ can be viewed as a direct sum of the syzygy module of its dual graph with edges labeled by powers of linear forms. In~\cite{Rose}, by using a decomposition of an edge labeled graph $G$ without changing the isomorphism class of the syzygy module, Rose obtained results related to the freeness of the syzygy module generated by linear edge labels that meet certain conditions. We extend these ideas to generalized splines on arbitrary graphs which can be decomposed into disjoint cycles and free edges.

This paper consists of five sections. In Section~\ref{projdim}, we present basic definitions and properties related to projective modules and dimension. In Section~\ref{rankofacycle}, we discuss the rank of a cycle and the relation between rank and projective dimension. In Section~\ref{decomp}, we first introduce the module $M_v$, then we define the edge decomposition operations and finally we analyze the effects of these operations on the isomorphism class of the generalized spline module. The main results of this paper are presented in Section~\ref{freeness}. In this section, we first prove the freeness of $R_{(G,\alpha)}$ where $R = k[x,y]$ and $G$ is a cycle. Then we generalize this result to a graph containing only one cycle or having no interior edges. In the case of $R = k[x_1 , \ldots , x_d]$, we first give  freeness criteria for generalized spline modules on cycles and then we generalize our results to graphs that decompose into disjoint cycles and free edges under some conditions as the final result of the paper.

In the rest of the paper, we refer to multivariate generalized splines as splines.

\section{Projective Modules and Projective Dimension}
\label{projdim}

Let $P$ be an $R$-module. $P$ is said to be projective if for every surjective module homomorphism $f: A \to B$ and every module homomorphism $g: P \to B$, there exists a module homomorphism $h: P \to A$ such that the following diagram commutes, namely $f \circ h = g$.
\begin{displaymath}
\xymatrix{
{} & P \ar@{-->}[ld]_{h} \ar[d]^g & {}  \\
A  \ar[r]^f & B \ar[r] & 0}
\end{displaymath}

Every free module is projective, but the converse is not true in general. If $R$ is a principal ideal domain, then every projective $R$-module is free. Every finitely generated projective module over polynomial rings is also free by Quillen-Suslin Theorem. Basic properties of projective modules are given below.

\begin{prop} Let $P$ be an $R$-module. Then
\begin{enumerate}[label=\alph*)]
	\item $P$ is projective if and only if all short exact sequences $0 \to A \to B \to P \to 0$ of $R$-modules split.
	\item $P$ is projective if and only if it is a direct summand of a free $R$-module.
\end{enumerate}
 \label{projmod}
\end{prop}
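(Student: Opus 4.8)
The plan is to prove both equivalences as standard facts about projective modules, relying only on the lifting (diagram) definition of projectivity given just above the statement.

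For part (a), the proof splits into two implications. First I would assume $P$ is projective and show any short exact sequence $0 \to A \xrightarrow{\iota} B \xrightarrow{\pi} P \to 0$ splits. The idea is to apply the lifting property to the surjection $\pi \colon B \to P$ together with the identity map $g = \mathrm{id}_P \colon P \to P$. Projectivity then yields a homomorphism $h \colon P \to B$ with $\pi \circ h = \mathrm{id}_P$, which is precisely a section of $\pi$, so the sequence splits. For the converse, I would assume every such short exact sequence splits and verify the lifting property directly. Given a surjection $f \colon A \to B$ and a map $g \colon P \to B$, the plan is to form the pullback (fiber product) $X = \{(a,p) \in A \oplus P : f(a) = g(p)\}$, which fits into a short exact sequence $0 \to \ker f \to X \to P \to 0$ whose surjection $X \to P$ is the second projection. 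By hypothesis this sequence splits, giving a section $s \colon P \to X$; composing with the first projection $X \to A$ produces the desired lift $h$ satisfying $f \circ h = g$.

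For part (b), I would again argue both directions. Assuming $P$ is projective, choose a surjection $\varphi \colon F \to P$ from a free module $F$ (for instance on a generating set of $P$), giving the short exact sequence $0 \to \ker\varphi \to F \xrightarrow{\varphi} P \to 0$. By part (a) this sequence splits, so $F \cong \ker\varphi \oplus P$, exhibiting $P$ as a direct summand of a free module. Conversely, if $P$ is a direct summand of a free module $F$, say $F \cong P \oplus Q$, I would verify the lifting property by first noting that free modules are projective (a lift on a basis extends freely), and then observing that a direct summand of a projective module is projective: given data $f \colon A \twoheadrightarrow B$ and $g \colon P \to B$, extend $g$ to $F = P \oplus Q$ by zero on $Q$, lift through $F$ using freeness, and restrict the lift back to $P$.

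I do not expect a genuine obstacle here, as these are foundational homological facts; the only point requiring mild care is the pullback construction in the converse of part (a), where one must check that the second projection $X \to P$ is surjective (using surjectivity of $f$) and that its kernel is $\iota(\ker f)$ via the first projection, so that the splitting of the pullback sequence genuinely produces a lift of $g$ rather than merely a section of an unrelated map.
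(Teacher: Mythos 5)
Your proof is correct and complete: the paper itself does not prove this proposition but simply cites Rotman (Proposition 6.73 and Theorem 6.76), and your argument --- the identity-map lifting for the splitting, the pullback construction for the converse of (a), and the free-surjection/direct-summand argument for (b) --- is exactly the standard textbook proof being cited. The one delicate point, that the second projection of the pullback is surjective with kernel $\ker f$, is something you explicitly flag and handle, so there is no gap.
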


\begin{proof} See Proposition 6.73 and Theorem 6.76 in~\cite{Rot}.
\end{proof}

\begin{prop} Let $\{ P_i \text{ $\vert$ } i \in I \}$ be a family of $R$-modules. Then $P = \bigoplus\limits_{i \in I} P_i$ is projective if and only if $P_i$ is projective for all $i \in I$.
\label{projmod2}
\end{prop}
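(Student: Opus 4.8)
The plan is to deduce both implications from the characterization in Proposition~\ref{projmod}(b), namely that a module is projective precisely when it is a direct summand of a free module. The two auxiliary facts I will need are that an arbitrary direct sum of free modules is again free, and that the relation ``is a direct summand of'' behaves well under combination; both are elementary and I would use them without further comment.

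For the direction assuming each $P_i$ is projective, I would first invoke Proposition~\ref{projmod}(b) to produce, for every $i \in I$, a free module $F_i$ together with a module $Q_i$ such that $F_i = P_i \oplus Q_i$. Forming the direct sum over $I$ and regrouping the summands yields $\bigoplus_{i \in I} F_i = \left( \bigoplus_{i \in I} P_i \right) \oplus \left( \bigoplus_{i \in I} Q_i \right) = P \oplus \left( \bigoplus_{i \in I} Q_i \right)$. Since the left-hand side is a direct sum of free modules it is free, so $P$ is exhibited as a direct summand of a free module and is therefore projective by Proposition~\ref{projmod}(b).

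For the converse, suppose $P = \bigoplus_{i \in I} P_i$ is projective. Fixing $j \in I$, the splitting $P = P_j \oplus \left( \bigoplus_{i \neq j} P_i \right)$ displays $P_j$ as a direct summand of $P$. By Proposition~\ref{projmod}(b) applied to $P$ there is a module $Q$ with $P \oplus Q$ free, and composing this with the previous splitting shows that $P_j$ is itself a direct summand of the free module $P \oplus Q$. Hence $P_j$ is projective, once more by Proposition~\ref{projmod}(b), and since $j$ was arbitrary the claim follows. The only point requiring any care is the regrouping of an arbitrary, possibly infinite, direct sum in the first direction --- verifying that $\bigoplus_{i \in I} F_i$ decomposes as claimed and is genuinely free --- but this is routine, and I anticipate no real obstacle in the argument.
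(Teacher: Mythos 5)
Your proof is correct and complete. The paper itself gives no argument here --- it simply cites Proposition 6.75 of Rotman --- and your proof is the standard one that the cited source is based on: both directions reduce to the characterization of projectives as direct summands of free modules (Proposition~\ref{projmod}(b)), using that an arbitrary direct sum of free modules is free for the forward direction and that a direct summand of a direct summand is a direct summand for the converse. No gaps.
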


\begin{proof} See Proposition 6.75 in~\cite{Rot}.
\end{proof}

Let $A$ be an $R$-module. A projective resolution of $A$ is an exact sequence
\begin{displaymath}
\mathcal{P}_{\bullet}: \ldots \to P_2 \to P_1 \to P_0 \to A \to 0
\end{displaymath}
in which each $P_n$ is projective. Every $R$-module $A$ has a projective resolution. The length of a shortest projective resolution of $A$ is called the projective dimension of $A$, denoted by $\text{pd }A$. If $A$ has no finite projective resolution, then $\text{pd }A = \infty$. An $R$-module $A$ is projective if and only if $\text{pd }A = 0$.

The following theorem is given without proof in~\cite{Pass} page 76.

\begin{theo} Let $0 \to B \to P \to A \to 0$ be a short exact sequence of $R$-modules with $P$ is projective. Then $\emph{pd }A = a > 0$ implies $\emph{pd }B = a-1$.
\label{pdim}
\end{theo}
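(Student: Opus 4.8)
The plan is to argue by dimension shifting, comparing the given sequence with the start of a shortest projective resolution of $A$ and then invoking Schanuel's lemma. Fix a projective resolution of $A$ of minimal length,
\[
0 \to P_a \to \cdots \to P_1 \to P_0 \xrightarrow{\varepsilon} A \to 0,
\]
and set $K = \ker \varepsilon$, the first syzygy of $A$. The key numerical fact I would establish first is that $\operatorname{pd} K = a-1$.

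To see $\operatorname{pd} K \le a-1$, note that truncating the resolution at $P_0$ yields the exact sequence $0 \to P_a \to \cdots \to P_1 \to K \to 0$ (since $K = \operatorname{im}(P_1 \to P_0)$), which is a projective resolution of $K$ of length $a-1$; here the hypothesis $a>0$ guarantees that this truncation is nonempty and that $K$ genuinely appears. For the reverse inequality, if $\operatorname{pd} K = b$ were strictly smaller than $a-1$, I would splice a length-$b$ resolution of $K$ onto the short exact sequence $0 \to K \to P_0 \to A \to 0$ to produce a projective resolution of $A$ of length $b+1 < a$, contradicting the minimality of $a$. Hence $\operatorname{pd} K = a-1$.

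It remains to transfer this to $B$. Applying Schanuel's lemma to the two short exact sequences $0 \to B \to P \to A \to 0$ and $0 \to K \to P_0 \to A \to 0$, both having projective middle term, gives an isomorphism $B \oplus P_0 \cong K \oplus P$. Since $P$ and $P_0$ are projective, adjoining them as direct summands does not change projective dimension: using Proposition~\ref{projmod2} one checks that $\operatorname{pd}(M \oplus Q) = \operatorname{pd} M$ whenever $Q$ is projective and $M \ne 0$. Therefore
\[
\operatorname{pd} B = \operatorname{pd}(B \oplus P_0) = \operatorname{pd}(K \oplus P) = \operatorname{pd} K = a-1,
\]
as required.

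The main obstacle is the isomorphism feeding the final computation: Schanuel's lemma is the real engine, and it is not recorded in the excerpt, so I would either cite it from the standard references already in use (e.g.\ \cite{Rot}) or prove it directly from the splitting characterization in Proposition~\ref{projmod}. I would also take care with the hypothesis $a>0$: when $a=0$ the module $A$ is projective, the first syzygy degenerates to $K=0$, and the isomorphism collapses to $B \oplus A \cong P$, forcing $\operatorname{pd} B = 0$ rather than the nonsensical value $a-1=-1$; this is precisely why the statement excludes $a=0$. Finally, note that $a>0$ also ensures $B \ne 0$ and $K \ne 0$, so the invariance formula above applies legitimately.
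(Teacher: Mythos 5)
Your argument is correct and reaches the conclusion by a route organized differently from the paper's. The paper first shows $a \le b+1$ by splicing a minimal projective resolution of $B$ onto the given sequence, and then rules out $a < b+1$ by the iterated (``long'') Schanuel argument: it breaks the minimal resolutions of $A$ and $B$ into short exact sequences, applies Schanuel repeatedly to obtain $\text{Projective} \oplus A_{j+1} \cong \text{Projective} \oplus B_j$, and concludes from the projectivity of $A_a$ and Proposition~\ref{projmod2} that $B_{a-1}$ is projective, forcing $b \le a-1$. You instead compute the projective dimension of the first syzygy $K$ of a minimal resolution of $A$ directly (obtaining $\operatorname{pd} K = a-1$ by truncation for the upper bound and splicing for the lower bound), and then transfer this to $B$ with a single application of Schanuel's lemma together with the invariance of projective dimension under adding projective direct summands. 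Your version is the shorter, more standard textbook argument; the paper's avoids your invariance lemma at the cost of the iterated bookkeeping. Both rest on Schanuel, which the paper also uses without proof, so your plan to cite it from \cite{Rot} is consistent with the paper's own practice.

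One caveat: the identity $\operatorname{pd}(M \oplus Q) = \operatorname{pd} M$ for projective $Q$ and $M \ne 0$ does not follow from Proposition~\ref{projmod2} alone, as your phrasing suggests. The inequality $\operatorname{pd}(M \oplus Q) \le \operatorname{pd} M$ is immediate (adjoin $Q$ to the degree-zero term of a resolution of $M$), and the case $\operatorname{pd}(M\oplus Q)=0$ does follow from Proposition~\ref{projmod2}. But the reverse inequality when $\operatorname{pd}(M \oplus Q) = m \ge 1$ requires knowing that the $m$-th syzygy of \emph{any} projective resolution of $M \oplus Q$ is then projective --- a fact whose proof is essentially the same iterated Schanuel argument that the paper carries out, or an $\operatorname{Ext}$ characterization not developed here. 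So the step is standard and your proof is sound, but that invariance should be justified by the generalized Schanuel lemma (or an explicit citation) rather than attributed to Proposition~\ref{projmod2}.
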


\begin{proof}
Let $\text{pd }A = a > 0$ and $\text{pd }B = b$. We show that $a = b+1$. Since $\text{pd }B = b$, then there is a projective resolution of $B$ with length $b$ such as $0 \to P_b \to \ldots \to P_1 \to P_0 \to B \to 0$ and we get a finite projective resolution of $A$ with length $b+1$ by setting $0 \to P_b \to \ldots \to P_1 \to P_0 \to P \to A \to 0$. Hence we conclude that $a \leq b+1$.

Now assume that $a < b+1$. Let $0 \to Q_b \to \ldots \to Q_1 \to Q_0 \to B \to 0$ be a projective resolution of $B$ with length $b$. This complex breaks up into short exact sequences $0 \to B_{i+1} \to  Q_i \to B_i \to 0$ of $R$-modules with $B_0 = B$ and $B_{b+1} = 0$. We can do the same thing to a projective resolution of $A$ with length $a$. The resolution $0 \to P_a \to \ldots \to P_1 \to P_0 \to A \to 0$ breaks up into short exact sequences $0 \to A_{i+1} \to  P_i \to A_i \to 0$ of $R$-modules with $A_0 = A$ and $A_{a+1} = 0$. Applying Schanuel Lemma to the following short exact sequences
\begin{displaymath}
\begin{gathered}
0 \to B \to P \to A \to 0 \\
0 \to A_1 \to  P_0 \to A \to 0
\end{gathered}
\end{displaymath}
gives $P \oplus A_1 \cong P_0 \oplus B$. Now applying Schanuel Lemma to the following short exact sequences
\begin{displaymath}
\begin{gathered}
0 \to P \oplus B_1 \to P \oplus Q_0 \to P \oplus B \to 0 \\
0 \to P \oplus A_2 \to P \oplus P_1 \to P \oplus A_1 \to 0
\end{gathered}
\end{displaymath}
gives us $\underbrace{P_0 \oplus Q_0 \oplus P}_{\text{Projective}} \oplus A_2 \cong \underbrace{P \oplus P_1 \oplus P_0}_{\text{Projective}} \oplus B_1$.\bigskip

\noindent
Iterating this argument yields $\text{Projective} \oplus A_{j+1} \cong \text{Projective} \oplus B_j$. Here $A_a$ is projective, so that $\text{Projective} \oplus A_a$ is projective. Hence $B_{a-1}$ is projective by Theorem~\ref{projmod2}, so the resolution for $B$ ends at $B_{a-1}$ or before. Then $b \leq a-1$, which is a contradiction to our assumptation. Hence $a = b+1$.
\end{proof}

An upper bound for the projective dimension of modules over polynomial rings is given by Hilbert Syzygy Theorem.

\begin{theo} \emph{(Hilbert Syzygy Theorem)} Let $R = k[x_1 , \ldots , x_n]$ be the polynomial ring and $M$ be a finitely generated $R$-module. Then $\emph{pd }M \leq n$.
\label{hst}
\end{theo}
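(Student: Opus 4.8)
The plan is to prove the sharper statement that the global dimension of $R=k[x_1,\dots,x_n]$, i.e.\ the supremum of $\operatorname{pd} M$ over all $R$-modules $M$, is at most $n$; the assertion for finitely generated $M$ is then the special case. I would argue by induction on $n$, the engine being a change-of-rings comparison between $R$ and the subring $S=k[x_1,\dots,x_{n-1}]$, for which $R=S[x_n]$.

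The base case $n=0$ is the field $R=k$: every module is a vector space, hence free, hence projective, so the global dimension is $0$. For the inductive step, fix an $R$-module $M$, view it as an $S$-module by restriction, and form $R\otimes_S M$ with its $R$-module structure coming from the left factor. The key device is the short exact sequence of $R$-modules
\[
0 \to R\otimes_S M \xrightarrow{\phi} R\otimes_S M \xrightarrow{\mu} M \to 0,
\]
where $\mu(f\otimes m)=fm$ is the action map and $\phi(f\otimes m)=x_n f\otimes m - f\otimes x_n m$. One verifies directly that $\mu$ is surjective, that $\mu\circ\phi=0$, and---using that $R$ is free over $S$ with basis $\{1,x_n,x_n^2,\dots\}$---that the sequence is in fact exact.

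I would then exploit flatness: since $R$ is free over $S$, applying $R\otimes_S(-)$ to a projective $S$-resolution of $M$ produces a projective $R$-resolution of $R\otimes_S M$ (base change carries projectives to projectives by Proposition~\ref{projmod}), whence $\operatorname{pd}_R(R\otimes_S M)\le \operatorname{pd}_S M$. Combining this with the short exact sequence above through the dimension-shifting inequality $\operatorname{pd}_R M \le \operatorname{pd}_R(R\otimes_S M)+1$---the general bound on projective dimension in a short exact sequence, proved by the same Schanuel-type argument as Theorem~\ref{pdim} (note that when $\operatorname{pd}_R(R\otimes_S M)=0$ the middle term is projective and Theorem~\ref{pdim} applies verbatim)---yields $\operatorname{pd}_R M \le \operatorname{pd}_S M + 1$. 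Taking the supremum over all $M$ gives $\operatorname{gl.dim} R \le \operatorname{gl.dim} S + 1$, and the induction hypothesis $\operatorname{gl.dim} S \le n-1$ closes the argument.

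The main obstacle is the exactness of the change-of-rings sequence, specifically the injectivity of $\phi$ and exactness at the middle term, which is exactly where the free-module structure of $R$ over $S$ must be used carefully---either a direct computation with the basis $\{x_n^i\}$, or identification of the sequence with a truncated Koszul-type complex. A secondary point demanding care is that restriction of scalars can destroy finite generation, so the whole induction must be carried out at the level of global dimension---a supremum over all modules---rather than for a single finitely generated module; the stated bound $\operatorname{pd} M\le n$ then follows at once.
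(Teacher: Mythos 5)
The paper does not actually prove this theorem: its ``proof'' is a pointer to Corollary 10.167 of Rotman, so there is nothing internal to compare against. Your argument is essentially the standard textbook proof of that cited result (an induction on $n$ via the change-of-rings sequence for $R=S[x_n]$), and it is correct in outline: the sequence $0 \to R\otimes_S M \xrightarrow{\phi} R\otimes_S M \xrightarrow{\mu} M \to 0$ is exact (injectivity of $\phi$ and exactness in the middle follow from the decomposition $R\otimes_S M \cong \bigoplus_{i\ge 0} x_n^i\otimes M$ by a leading-term/telescoping computation, as you indicate), freeness of $R$ over $S$ gives $\operatorname{pd}_R(R\otimes_S M)\le \operatorname{pd}_S M$, and the induction at the level of global dimension correctly sidesteps the fact that restriction of scalars destroys finite generation. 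What this buys over the paper's approach is self-containedness; what it costs is that you must import one more general fact than the paper's Theorem~\ref{pdim} supplies.

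That is the one point to tighten: the inequality $\operatorname{pd}_R M \le \operatorname{pd}_R(R\otimes_S M)+1$ does not follow from Theorem~\ref{pdim} as stated, since the middle term $R\otimes_S M$ of your sequence is not projective in general. You need the general bound $\operatorname{pd} C \le 1+\max(\operatorname{pd} A,\operatorname{pd} B)$ for an arbitrary short exact sequence $0\to A\to B\to C\to 0$, which requires either the horseshoe lemma (build compatible resolutions of $A$ and $C$, take the kernels, and iterate) or the long exact sequence of $\operatorname{Ext}$. You flag this and the Schanuel-style argument does adapt, but as written it is an appeal to a lemma you have not proved rather than to one available in the paper. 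With that lemma supplied, the proof is complete, and in fact yields the sharper statement $\operatorname{gl.dim} k[x_1,\ldots,x_n]\le n$ for all modules, not just finitely generated ones.
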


\begin{proof} See Corollary 10.167 in~\cite{Rot}.
\end{proof}

In order to give a useful property of the projective dimension, we need the concept of regular sequences. Given a ring $R$, an $R$-regular sequence is a finite sequence of elements $r_1 , \ldots , r_k \in R$ such that $r_i$ is not a zero divisor of $\nicefrac{R}  {\langle r_1 , \ldots , r_{i-1}} \rangle$ for each $i$ and $\langle r_1 , \ldots , r_k \rangle R \neq R$. Here $k$ is called the length of the sequence.

\begin{theo} Let $R$ be a commutative ring with identity and $I \subset R$ be an ideal generated by an $R$-regular sequence of length $n$. Then $\emph{pd }\nicefrac{R}{I} = n$.
\label{regtheo}
\end{theo}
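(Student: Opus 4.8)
The plan is to exhibit an explicit finite free resolution of $R/I$ --- the Koszul complex on the regular sequence --- and then read off its projective dimension by peeling it apart into short exact sequences and applying Theorem~\ref{pdim}. Throughout I write $I_k=\langle r_1,\ldots,r_k\rangle$; since $I_k\subseteq I_n\neq R$, every $I_k$ is a proper ideal, so each initial segment $r_1,\ldots,r_k$ is again a regular sequence and the usual reductions apply.

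The base case $n=1$ already displays the whole mechanism. Here $I=\langle r_1\rangle$ with $r_1$ a non-zero-divisor, so multiplication by $r_1$ is injective and
\[
0 \longrightarrow R \xrightarrow{\ \cdot r_1\ } R \longrightarrow R/\langle r_1\rangle \longrightarrow 0
\]
is a free resolution of length $1$, giving $\mathrm{pd}\,R/I\le 1$. If equality failed, then $R/I$ would be projective and this sequence would split by Proposition~\ref{projmod}, producing an $R$-linear retraction $\rho$ of $\cdot r_1$; then $1=\rho(r_1)=r_1\rho(1)$ shows $r_1$ is a unit, contradicting $\langle r_1\rangle\neq R$. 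Hence $\mathrm{pd}\,R/I=1$.

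For general $n$ I would resolve $R/I$ by the Koszul complex $K_\bullet$ on $r_1,\ldots,r_n$, a free complex of length $n$ with $K_n\cong R$, $K_{n-1}\cong R^n$, and top differential $1\mapsto\bigl((-1)^{i-1}r_i\bigr)_i$. Regularity of the sequence is exactly what makes $K_\bullet$ acyclic, hence a free resolution of $R/I$; this acyclicity is itself proved by induction on $n$ from the short exact sequence $0\to R/I_{n-1}\xrightarrow{\cdot r_n}R/I_{n-1}\to R/I_n\to 0$, whose left-exactness uses that $r_n$ is a non-zero-divisor modulo $I_{n-1}$. Acyclicity immediately yields the upper bound $\mathrm{pd}\,R/I\le n$. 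Now break $K_\bullet$ into short exact sequences $0\to Z_i\to F_i\to Z_{i-1}\to 0$ with $F_i$ free and $Z_{-1}=R/I$. A splitting of the top injection $F_n\to F_{n-1}$ would be a retraction given by a row vector $(c_1,\ldots,c_n)$ with $\sum_i (-1)^{i-1}c_i r_i=1$, i.e.\ $I=R$, which is false; so the top syzygy $Z_{n-2}=\mathrm{coker}(F_n\to F_{n-1})$ is \emph{not} projective, and since it admits a length-one free presentation, $\mathrm{pd}\,Z_{n-2}=1$.

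It remains to climb back up, and here non-projectivity propagates: in $0\to Z_i\to F_i\to Z_{i-1}\to 0$ with $F_i$ free, if $Z_{i-1}$ were projective the sequence would split and $Z_i$ would be a direct summand of a free module, hence projective by Propositions~\ref{projmod} and~\ref{projmod2}. Thus every $Z_i$, and in particular $R/I=Z_{-1}$, is non-projective, so Theorem~\ref{pdim} applies to each such sequence and gives $\mathrm{pd}\,Z_{i-1}=\mathrm{pd}\,Z_i+1$; starting from $\mathrm{pd}\,Z_{n-2}=1$ and iterating up to $Z_{-1}$ yields $\mathrm{pd}\,R/I=n$. I expect the lower bound, not the upper bound, to be the real obstacle: the short-exact-sequence estimates for projective dimension produce only upper bounds, and because the two outer terms of the natural sequence $0\to R/I_{n-1}\xrightarrow{\cdot r_n}R/I_{n-1}\to R/I_n\to 0$ have equal projective dimension, no formal inequality forces $\mathrm{pd}\,R/I_n$ up to $n$. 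The genuine input is the non-splitting obstruction above (equivalently, the nonvanishing of the top Koszul homology), and some care is also needed to verify the acyclicity of $K_\bullet$, since that is where regularity is truly used.
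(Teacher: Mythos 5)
Your proof is correct, but it takes a genuinely different route from the paper's. The paper argues by induction on $n$: its base case is the same short exact sequence $0 \to R \to R \to R/\langle r_1\rangle \to 0$ you use, and its inductive step passes to the quotient ring $R^* = R/\langle r_1\rangle$, where the images of $r_2,\ldots,r_n$ form a regular sequence of length $n-1$, and then invokes the first change of rings theorem $\mathrm{pd}_R\,M = 1 + \mathrm{pd}_{R^*} M$ to add back the missing $1$. You instead build the Koszul resolution once and for all, read off the upper bound from its length, and obtain the lower bound from the observation that a splitting of the top differential would force $1 \in I$ (exactly the nonvanishing of the top Koszul homology), after which Theorem~\ref{pdim} walks the dimension down the chain of syzygies; your bookkeeping there is consistent, and the finiteness needed to apply Theorem~\ref{pdim} is supplied by the truncated Koszul resolutions of each $Z_i$. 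Each approach outsources one standard fact: the paper cites the change-of-rings theorem without proof, while you cite the acyclicity of the Koszul complex with only a sketch --- your sequence $0\to R/I_{n-1}\xrightarrow{\cdot r_n} R/I_{n-1}\to R/I_n\to 0$ is indeed the heart of the inductive acyclicity argument, via the long exact sequence of Koszul homology, but that step would need to be written out to make the proof self-contained. A small point in your favour: your base-case argument (a retraction of multiplication by $r_1$ makes $r_1$ a unit, contradicting $I\neq R$) works over an arbitrary commutative ring, whereas the paper's version of that step passes through idempotence and the assumption that $R$ is an integral domain, a hypothesis not present in the statement of the theorem.
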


\begin{proof} Let $I = \langle x_1 , \ldots , x_n \rangle$ where $x_1 , \ldots , x_n$ is an $R$-regular sequence. We use induction on $n$. For $n=1$, we have the following short exact sequence
\begin{equation}
0 \to R \xrightarrow{f} R \xrightarrow{\pi} \nicefrac{R}{<x_1>} \to 0
\label{eq0}
\end{equation}
such that $f(r) = x_1 r$ for all $r \in R$ and $\pi$ is the natural quotient map. By Equation~\ref{eq0}, we obtain $\text{pd }\nicefrac{R}{<x_1>} \leq 1$. Assume that $\text{pd }\nicefrac{R}{<x_1>}= 0$, then $\nicefrac{R}{<x_1>}$ is projective and so $<x_1>$ is a direct summand of $R$. Thus $x_1$ is an idempotent element, namely $x_1 ^2 = x_1$. Since $R$ is an integral domain, either $x_1 = 0$ or $x_1 = 1$. However both these cases are impossible since $x_1$ itself is an $R$-regular sequence. This contradicts our assumption. Hence we conclude that $\text{pd }\nicefrac{R}{<x_1>} = 1$.

For $n > 1$, we fix $R^* = \nicefrac{R}{<x_1>}$. Since $x_1 , \ldots , x_n$ is an $R$-regular sequence, $x_2 ^* , \ldots , x_n ^*$ is an $R^*$-sequence of length $n-1$. Hence we get $\text{pd }_{R^*} (\nicefrac{R^*}{<x_2 ^* , \ldots , x_n ^*>}) = \text{pd }_{R^*} (\nicefrac{R}{<x_1 , \ldots , x_n>}) = n-1$ by the induction hypothesis. Therefore we obtain
\begin{displaymath}
\text{pd }_{R} (\nicefrac{R}{<x_1 , \ldots , x_n>}) = 1 + \text{pd }_{R^*} (\nicefrac{R}{<x_1 , \ldots , x_n>}) = 1 + n-1 = n
\end{displaymath}
by the first change of rings theorem.
\end{proof}

\section{The Rank of a Cycle}
\label{rankofacycle}

Let $(C_n , \alpha)$ be an edge labeled cycle with edge labels $l_1 , \ldots , l_n$. The rank of $C_n$, denoted by $\text{rk } C_n$ is the dimension of the linear span $\langle l_1 , \ldots , l_n \rangle$.  In particular, the rank can  be seen as the codimension of the intersection of the hyperplanes $l_i = 0$ for all $i$ when all $l_i$ are linear.

Given an edge labeled cycle $(C_n , \alpha)$ with edge labels $\{ l_1 , \ldots , l_n \}$, fix the ideal $I = \langle l_1 , \ldots , l_n \rangle$. In the case all edge labels are homogeneous and linear, the relation between $\text{rk } C_n$ and $\text{pd } \nicefrac{R}{I}$ is given by the following theorem.

\begin{theo} Let $(C_n , \alpha)$ be an edge labeled cycle with all edge labels are linear and homogeneous. Then
\begin{displaymath}
\emph{pd } \nicefrac{R}{I} = \emph{rk } C_n.
\end{displaymath}
\label{ranktheo}
\end{theo}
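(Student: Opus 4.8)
The plan is to reduce the computation of $\text{pd } \nicefrac{R}{I}$ to the regular-sequence result already established in Theorem~\ref{regtheo}. Set $r = \text{rk } C_n$, the dimension of the vector space $V = \langle l_1 , \ldots , l_n \rangle$ spanned by the edge labels inside the degree-one part of $R$. First I would extract a maximal linearly independent subset $\{ l_{i_1} , \ldots , l_{i_r} \}$ of the labels; since every remaining $l_j$ is a $k$-linear combination of these, the ideal is unchanged, $I = \langle l_1 , \ldots , l_n \rangle = \langle l_{i_1} , \ldots , l_{i_r} \rangle$. Thus it suffices to show that $r$ linearly independent linear forms generate an ideal whose quotient has projective dimension exactly $r$.

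The key step is to recognize that $r$ linearly independent homogeneous linear forms form an $R$-regular sequence of length $r$. I would argue this by a linear change of coordinates: because $l_{i_1} , \ldots , l_{i_r}$ are linearly independent in the $d$-dimensional space of linear forms, they can be completed to a basis, and this basis defines a $k$-algebra automorphism $\phi$ of $R = k[x_1 , \ldots , x_d]$ sending $l_{i_1} , \ldots , l_{i_r}$ to the coordinate variables $x_1 , \ldots , x_r$. Since $x_1 , \ldots , x_r$ is visibly a regular sequence --- each successive quotient $\nicefrac{R}{\langle x_1 , \ldots , x_{j-1} \rangle} \cong k[x_j , \ldots , x_d]$ is an integral domain in which $x_j$ is a nonzero nonunit, and $\langle x_1 , \ldots , x_r \rangle \neq R$ as these are forms of positive degree --- the sequence $l_{i_1} , \ldots , l_{i_r}$ is regular as well. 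Equivalently, one can bypass the automorphism and check directly that in $\nicefrac{R}{\langle l_{i_1} , \ldots , l_{i_{j-1}} \rangle}$, which is again a polynomial ring on $d - (j-1)$ variables, the image of $l_{i_j}$ is a nonzero linear form (nonzero precisely by linear independence) and hence a nonzerodivisor; properness of $I$ follows since all generators lie in the irrelevant maximal ideal $\langle x_1 , \ldots , x_d \rangle$.

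With the regular sequence in hand, Theorem~\ref{regtheo} gives $\text{pd } \nicefrac{R}{I} = \text{pd } \nicefrac{R}{\langle l_{i_1} , \ldots , l_{i_r} \rangle} = r = \text{rk } C_n$, completing the argument. I expect the main subtlety to be the passage through the change of coordinates: one must verify that projective dimension is invariant under the algebra automorphism $\phi$ (equivalently, that $\phi$ carries a projective resolution of $\nicefrac{R}{\langle x_1 , \ldots , x_r \rangle}$ to one of $\nicefrac{R}{I}$ of the same length), and that the chosen independent forms genuinely generate the full ideal $I$. Both points are routine but deserve care, so I would favor the direct verification of the regular-sequence condition to keep the argument self-contained and to avoid invoking functoriality of projective dimension under ring automorphisms.
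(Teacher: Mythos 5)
Your proof is correct: the paper itself gives no argument here, deferring entirely to Theorem 4.2 of Rose~\cite{Ros1}, and your reduction --- replace the labels by a maximal linearly independent subset generating the same ideal, observe that linearly independent homogeneous linear forms form an $R$-regular sequence, and invoke Theorem~\ref{regtheo} --- is exactly the standard argument underlying that reference. No gaps; the direct verification of regularity via the quotient being a polynomial ring in fewer variables is the cleaner of your two routes and is all that is needed.
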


\begin{proof} See Theorem 4.2 in~\cite{Ros1}.
\end{proof}

If the edge labels are not linear, then the statement of Theorem~\ref{ranktheo} does not hold. In this case, there is no even inequality between $\text{rk } C_n$ and $\text{pd } \nicefrac{R}{I}$. The following example illustrates this fact:

\begin{ex} Consider the following edge labeled $3$-cycles $(G_1 , \alpha_1)$ and $(G_2 , \alpha_2)$ with base rings $k[x,y,z]$ and $k[x,y,z,t]$ respectively.
\begin{figure}[H]
\centering
\scalebox{0.16}{\includegraphics{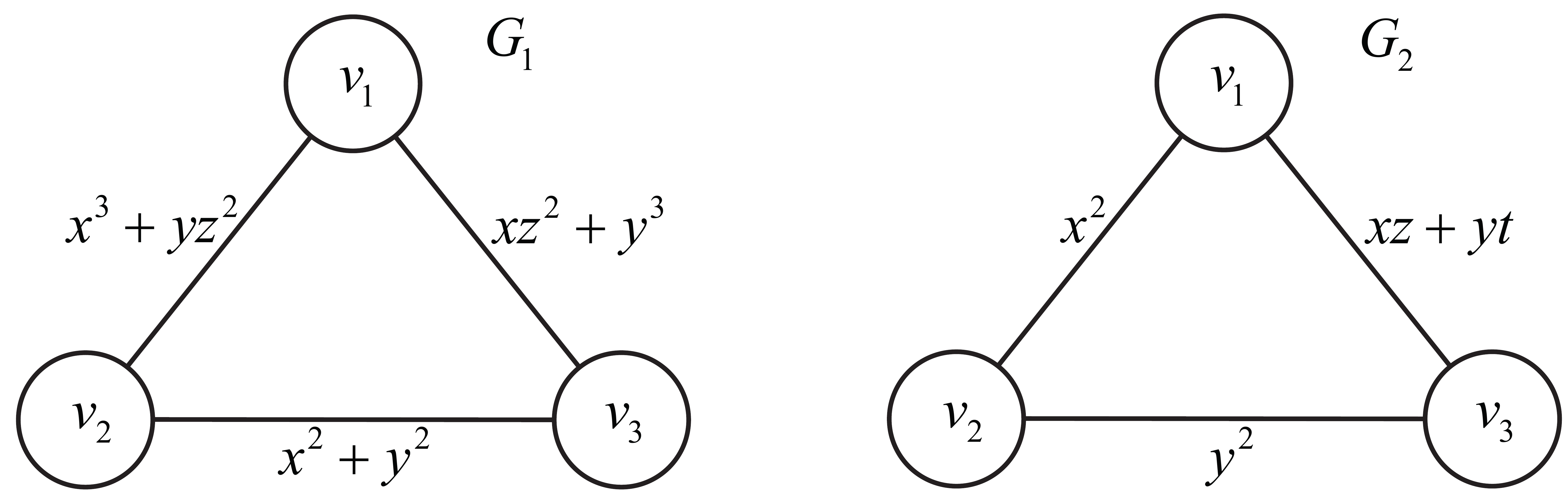}}
\caption{Relation between $\text{rk } C_3$ and $\text{pd } \nicefrac{R}{I}$}
\label{rk1}
\end{figure}

\noindent
Fix the ideals $I_1 = \langle x^3 + yz^2 , x^2 + y^2 , xz^2 + y^3 \rangle$ and $I_2 = \langle x^2 , y^2 , xz + yt \rangle$. It can be computed by Macaulay2~\cite{Gra} that $\text{rk } G_1 = 3$, $\text{pd } \big( \nicefrac{k[x,y,z]}{I_1} \big) = 2$ and $\text{rk } G_2 = 3$, $\text{pd } \big( \nicefrac{k[x,y,z,t]}{I_2} \big) = 4$.
\end{ex}

Even though Theorem~\ref{ranktheo} does not hold in general, we have the following theorem for cycles of rank at most two.

\begin{theo} Let $(C_n , \alpha)$ be an edge labeled cycle with rank at most two. Then $\emph{pd } \nicefrac{R}{I} \leq \emph{rk } C_n$.
\label{rkthm}
\end{theo}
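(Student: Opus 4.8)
The plan is to exploit the hypothesis $\text{rk } C_n \leq 2$ to bound the number of generators of $I$ and then to produce an explicit short free resolution of $\nicefrac{R}{I}$. Since $\text{rk } C_n$ is by definition the dimension of the $k$-linear span of the edge labels $l_1 , \ldots , l_n$, a subset of the $l_i$ forms a $k$-basis of that span, and every remaining label is a $k$-linear combination of this basis; hence $I = \langle l_1 , \ldots , l_n \rangle$ is already generated by $\text{rk } C_n$ of the labels. It therefore suffices to treat the three cases $\text{rk } C_n = 0, 1, 2$ separately, exhibiting in each a projective resolution of length at most the rank.

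The cases of rank $0$ and $1$ are immediate. If $\text{rk } C_n = 0$, then every $l_i = 0$, so $I = 0$ and $\nicefrac{R}{I} = R$ is free, giving $\text{pd } \nicefrac{R}{I} = 0$. If $\text{rk } C_n = 1$, then $I = \langle g \rangle$ is principal with $g \neq 0$; since $R$ is an integral domain, $g$ is not a zero divisor, and the short exact sequence $0 \to R \xrightarrow{\cdot g} R \to \nicefrac{R}{I} \to 0$ shows $\text{pd } \nicefrac{R}{I} \leq 1$.

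The rank $2$ case is where the real work lies, and the key point is that the first syzygy module of a pair of elements in the UFD $R$ is free of rank at most one. Writing $I = \langle g_1 , g_2 \rangle$ with $g_1, g_2$ nonzero, I would set $d = \gcd(g_1 , g_2)$ and factor $g_1 = d u$, $g_2 = d v$ with $\gcd(u , v) = 1$. A syzygy $(a,b)$ with $a g_1 + b g_2 = 0$ then satisfies $a u = - b v$, and coprimality of $u$ and $v$ forces $(a,b)$ to be an $R$-multiple of $(v , -u)$; thus the syzygy module equals $R \cdot (v , -u)$ and is free of rank one. This yields the free resolution
\begin{displaymath}
0 \to R \xrightarrow{\binom{v}{-u}} R^2 \xrightarrow{(g_1 \;\; g_2)} R \to \nicefrac{R}{I} \to 0
\end{displaymath}
of length $2$, whence $\text{pd } \nicefrac{R}{I} \leq 2 = \text{rk } C_n$.

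The main obstacle is precisely the freeness of this rank-$2$ syzygy module, and this is also where the hypothesis $\text{rk } C_n \leq 2$ is essential: for three or more generators the first syzygy module need not be free, the resolution can be strictly longer than the number of generators, and, as the preceding example demonstrates, the inequality $\text{pd } \nicefrac{R}{I} \leq \text{rk } C_n$ may fail in either direction. Everything else reduces to the elementary gcd computation in $R$ together with the observation that the rank controls the number of generators of $I$.
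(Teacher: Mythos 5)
Your proof is correct, but it takes a genuinely different route from the paper's. Both arguments begin the same way, reducing to at most two generators via the observation that the rank bounds the number of generators of $I$, and both dispose of rank $\leq 1$ by noting $I$ is principal (or zero). For rank $2$, however, the paper splits into cases according to the gcd of the two generators and invokes the regular-sequence machinery: if $l_1, l_2$ are coprime they form an $R$-regular sequence and Theorem~\ref{regtheo} gives $\text{pd } \nicefrac{R}{I} = 2$ exactly; if they share a nonconstant common factor $r$, the paper passes to $\langle f_1 , f_2 \rangle \cong \langle l_1 , l_2 \rangle$ with $f_1 , f_2$ coprime and reduces to the previous case; and if one divides the other, $I$ is principal and $\text{pd } = 1$. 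You instead compute the first syzygy module of $(g_1 , g_2)$ directly in the UFD $R$, showing it is the free module $R \cdot (v , -u)$, and write down an explicit length-$2$ free resolution. This is a sound argument: the divisibility step ($v \mid a$ from $au = -bv$ and $\gcd(u,v)=1$) is valid in a UFD, injectivity of $R \to R^2$ follows since $R$ is a domain and $(v,-u) \neq 0$, and the resolution is exact whether or not it is minimal, which is all the inequality requires. What each approach buys: the paper's case analysis yields the \emph{exact} value of $\text{pd } \nicefrac{R}{I}$ in each subcase (at the cost of relying on Theorem~\ref{regtheo} and a change-of-rings argument), while your syzygy computation is more self-contained, treats all rank-$2$ configurations uniformly, and in effect directly exhibits the freeness of $B_{(C_n , \alpha)}$ that the paper later extracts via Proposition~\ref{cycleprop2}.
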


\begin{proof} Let $(C_n , \alpha)$ be an edge labeled cycle with edge labels $\{ l_1 , \ldots , l_n \}$. Fix the ideal $I = \langle l_1 , \ldots , l_n \rangle$. We suppose that $\text{rk } C_n = 2$. Thus $I$ has a generating set with two elements, which may not be minimal. Without loss of generality, say $I = \langle l_1 , l_2 \rangle$. 

If $l_1, l_2$ are coprime, then $l_1, l_2$ is an $R$-regular sequence. In order to see this, assume that $l_1 ,l_2$  are coprime but  not an $R$-regular sequence. Hence $l_2$ is a zero divisor of $\nicefrac{R}{\langle l_1 \rangle}$, namely there exists an element $0 \neq r \in \nicefrac{R}{\langle l_1 \rangle}$ such that $r l_2 \in \langle l_1 \rangle$ and so  $l_1$ divides $r l_2$. Since $l_1,l_2$ are coprime  $l_1$  divides $r$ but this contradics the fact that $0\neq r\in \nicefrac{R}{\langle l_1\rangle}$. Therefore $l_1,l_2$ is a regular sequence so that by Theorem~\ref{regtheo}, $\text{pd }\nicefrac{R}{I} = 2$.

If a greatest common divisor $(l_1,l_2)\neq l_1 $  or $l_2$ then there exists a nonconstant common factor $r \in R$ satisfying $l_1 = f_1 r$ and $l_2 = f_2 r$ , where $f_1,f_2 \in R$  are coprime, so that $\langle f_1 , f_2 \rangle \cong \langle l_1 , l_2 \rangle$ as an $R$- module isomorphism via the map
\begin{align*}
\alpha : \langle f_1 , f_2 \rangle &\to \langle l_1 , l_2 \rangle \\
\alpha(f) &= rf.
\end{align*}
Hence $\text{pd }\nicefrac{R}{I} = \text{pd } \nicefrac{R}{\langle f_1 , f_2 \rangle} = 2$ as explained above.

If  we assume that without loss of generality $l_1=rl_2$, where $r\in R$ is a nonconstant, then $I = \langle l_1 , l_2 \rangle = \langle l_2 \rangle$. Since $R$ is an integral domain, $l_2$ is an $R$-regular sequence of length $1$ and hence $\text{pd }\nicefrac{R}{I} = 1$ by Theorem~\ref{regtheo}. Thus the inequality holds. In particular, when $r$ is constant or equivalenty $\text{rk } C_n = 1$ we have $\text{pd }\nicefrac{R}{I} =  1 = \text{rk } C_n$.

\end{proof}

\section{Decomposition of Edge Labeled Graphs}
\label{decomp}

In this section, we first present the module $M_v$ introduced by Tymoczko and the others in~\cite{Gil} to give a characterization of $R_{(G,\alpha)}$. Then we touch on decompositions of an edge labeled graph given by Rose in~\cite{Rose}.  

Let $(G,\alpha)$ be an edge labeled connected graph and fix a vertex $v \in V$. Then every spline $f \in R_{(G,\alpha)}$ can be expressed uniquely as $f = r \text{\textbf{1}} + f^v$, where $\text{\textbf{1}}$ is the trivial spline whose all entries are $1$, $f^v \in R_{(G,\alpha)}$ with $f^v _v = 0$ and $r = f_v$. In order to see this, define $f^v$ to be the spline $f^v = f - r \text{\textbf{1}}$. Hence $f^v _v = f_v - {r \text{\textbf{1}}}_v = r - r = 0$. This observation leads to the following theorem.

\begin{theo}~\cite{Gil} Let $(G,\alpha)$ be an edge labeled connected graph and $v \in V$. If $M_v = \langle f: f_v = 0 \rangle$ then $R_{(G,\alpha)} = R\emph{\textbf{1}} \oplus M_v$ as $R$-modules.
\label{mmodule}
\end{theo}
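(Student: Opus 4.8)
The plan is to read off the internal direct sum decomposition directly from the observation recorded just before the statement, by verifying the two defining conditions of an internal direct sum: that $R\textbf{1}$ and $M_v$ together span $R_{(G,\alpha)}$ and that they meet only in $0$. Before that, I would confirm that both pieces are genuinely $R$-submodules of $R_{(G,\alpha)}$. The all-ones labeling $\textbf{1}$ is a spline because for every edge $uv$ the difference of its labels is $1-1=0$, which lies in every edge ideal; hence each $r\textbf{1}$ is a spline and $R\textbf{1}$ is a submodule. Likewise $M_v$ is closed under addition and under scaling by $R$, since the $v$-th coordinate of a sum or an $R$-multiple of splines vanishing at $v$ again vanishes at $v$, so $M_v$ is a submodule.

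Next I would establish the spanning property $R_{(G,\alpha)}=R\textbf{1}+M_v$. Given an arbitrary spline $f$, put $r=f_v\in R$ and $f^v=f-r\textbf{1}$. Because $R_{(G,\alpha)}$ is an $R$-module, $f^v$ is again a spline, and its $v$-th entry is $f_v-r\cdot 1=0$, so $f^v\in M_v$; thus $f=r\textbf{1}+f^v\in R\textbf{1}+M_v$. For the trivial-intersection condition, suppose $g\in R\textbf{1}\cap M_v$. Then $g=r\textbf{1}$ for some $r\in R$ while simultaneously $g_v=0$; evaluating the $v$-th coordinate gives $r=g_v=0$, whence $g=0$. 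Combining the two steps yields $R_{(G,\alpha)}=R\textbf{1}\oplus M_v$.

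I do not expect a substantive obstacle. The only points needing care are confirming that $f^v$ remains inside the spline module (i.e.\ using closure of $R_{(G,\alpha)}$ under the module operations), and observing that the trivial intersection is forced simply by reading off the coordinate at $v$, where $\textbf{1}$ carries the unit entry $1$. This last observation also shows that $r\mapsto r\textbf{1}$ is injective, so $R\textbf{1}\cong R$ is free of rank one; this is precisely the form in which the decomposition will be exploited in the later sections. Note that connectedness of $G$ is not actually required for the argument, since $M_v$ absorbs all coordinates other than the one at $v$ regardless of how the vertices are joined.
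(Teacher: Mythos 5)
Your proposal is correct and follows essentially the same route as the paper, which justifies the theorem by the observation immediately preceding it: writing $f = r\textbf{1} + f^v$ with $r = f_v$ and $f^v = f - r\textbf{1} \in M_v$, the uniqueness of this expression being equivalent to your trivial-intersection check at the coordinate $v$. Your additional verifications (that $\textbf{1}$ is a spline, that $M_v$ is a submodule, and that $r \mapsto r\textbf{1}$ is injective) are correct and merely make explicit what the paper leaves implicit.
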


We can relate the module $M_v$ to the syzygy module generated by the edge labels as follows: Suppose that $G$ is the dual graph of a hereditary polyhedral complex $\Delta$ and $R = \mathbb{R}[x_1 , \ldots , x_n]$. Let $l_e$ be an affine form that generates the polynomials vanishing on the intersection of faces in $\Delta$ corresponding to each edge $e$ of $G$. Define the edge labeling function $\alpha$ as $\alpha(e) = l_e$ and let
\begin{displaymath}
B_{(G,\alpha)} = \left\{ (r_1 , \ldots , r_{|E|}) \in R^{|E|} : \sum_{e \in C} r_e l_e = 0 \text{ for all cycles } C \in G \right\}.
\end{displaymath}
Rose~\cite{Ros1} proved that $R_{(G,\alpha)} \cong R \oplus B_{(G,\alpha)}$ as $R$-modules. Together with Theorem~\ref{mmodule}, we conclude that $M_v \cong B_{(G,\alpha)}$ as $R$-modules for a fixed $v \in V$. We use $B_{(G,\alpha)}$ instead of $M_v$ for the rest of the paper.

Given an edge labeled graph $(G,\alpha)$, a cycle $C$ in $G$ that does not contain any smaller cycle is called a minimal cycle. The set of all minimal cycles in $G$ is denoted by $\mathcal{B}$. This set is also called a cycle space basis of $G$, see~\cite{Ian}. The syzygy module $B_{(G,\alpha)}$ can be presented as the kernel of the matrix $A$ as follows: The rows of $A$ are indexed by the elements of $\mathcal{B}$ and the columns are indexed by the edges of $G$. Then
\begin{displaymath}
A_{ij} = \begin{cases} 0, &\text{ if } e_j \notin C_i \\
\pm \alpha(e_j), &\text{ if } e_j \in C_i
\end{cases}.
\end{displaymath}
The sign of $\alpha(e_j)$ depends on the orientation of $e_j$ in $C_i$. If $G$ has no cycles, $A$ can be taken to be the row matrix $(0, \ldots , 0)$.

Let $e$ be an edge contained in two or more cycles in $G$. Our goal is to find conditions for deleting $e$ in one of the cycles, without changing the isomorphism class of $B_{(G,\alpha)}$.

\begin{defn} Let $(G,\alpha)$ be an edge labeled graph and $e \in E$. If $e$ is contained in two or more cycles, we call $e$ an interior edge. If $e$ is contained in only one cycle, we call $e$ an exterior edge, otherwise we call $e$ a free edge.
\end{defn}

We want to delete all interior edges from a cycle $C$ without changing the isomorphism class of $B_{(G,\alpha)}$ if it is possible. We give the following definition.

\begin{defn} Let $(G,\alpha)$ be an edge labeled graph and $e_i \in E$. If $e_i$ is an interior edge of a cycle $C \subset G$ such that $l_i \in \langle e \in C \text{ $\vert$ } e \text{ is an exterior edge} \rangle$, we say that $e_i$ is removable from $C$. If $G^\prime$ is obtained from
$G$ by a sequence of removals of interior edges from cycles, and $G^{\prime\prime}$ has the same matrix as $G^\prime$, we say that $G$ decomposes into $G^{\prime\prime}$.
If a cycle $C$ has no interior edges in $G^\prime$, we say that $C$ splits off from $G$.
\end{defn}

We represent the removal of an edge $e_i$ from a single cycle algebraically, by replacing the current value of $l_i$ with zero. Notice that when a cycle $C$ splits off, it does so with the removable edges deleted and their endpoints identified.

\begin{ex} Consider the following edge labeled diamond graph $(D_{3,3} , \alpha)$.
\begin{figure}[H]
\begin{center}
\scalebox{0.16}{\includegraphics{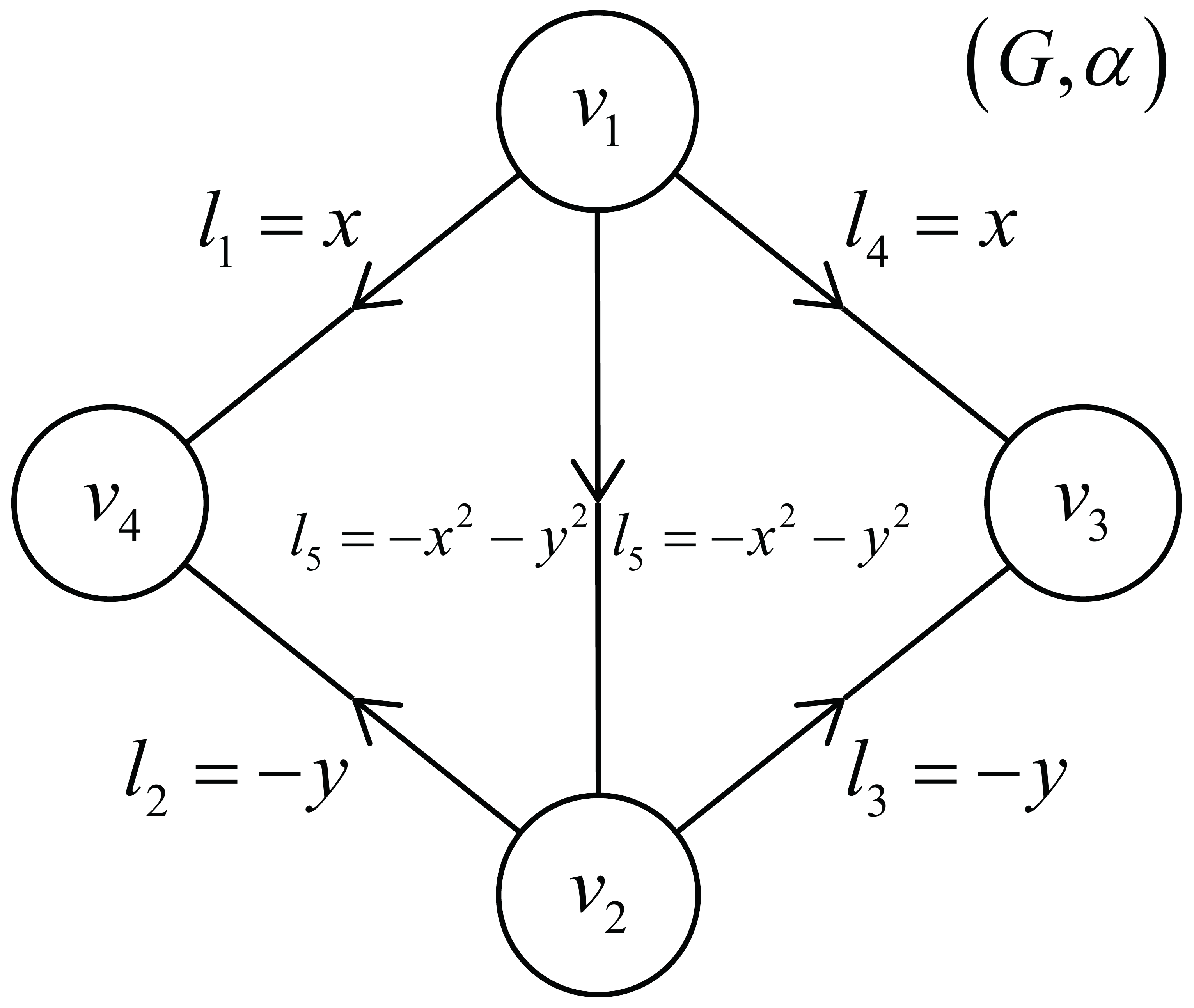}}
\caption{Edge labeled $D_{3,3}$}
\label{d331}
\end{center}
\end{figure}
\noindent The edge $l_5$ is an interior edge. It is removable from the right-side $3$-cycle of $(D_{3,3} , \alpha)$ since $l_5 = y l_3 - x l_4$. Hence we obtain the following graph $G^\prime$ by setting $l_5$ zero on the right-side $3$-cycle.
\begin{figure}[H]
\begin{center}
\scalebox{0.16}{\includegraphics{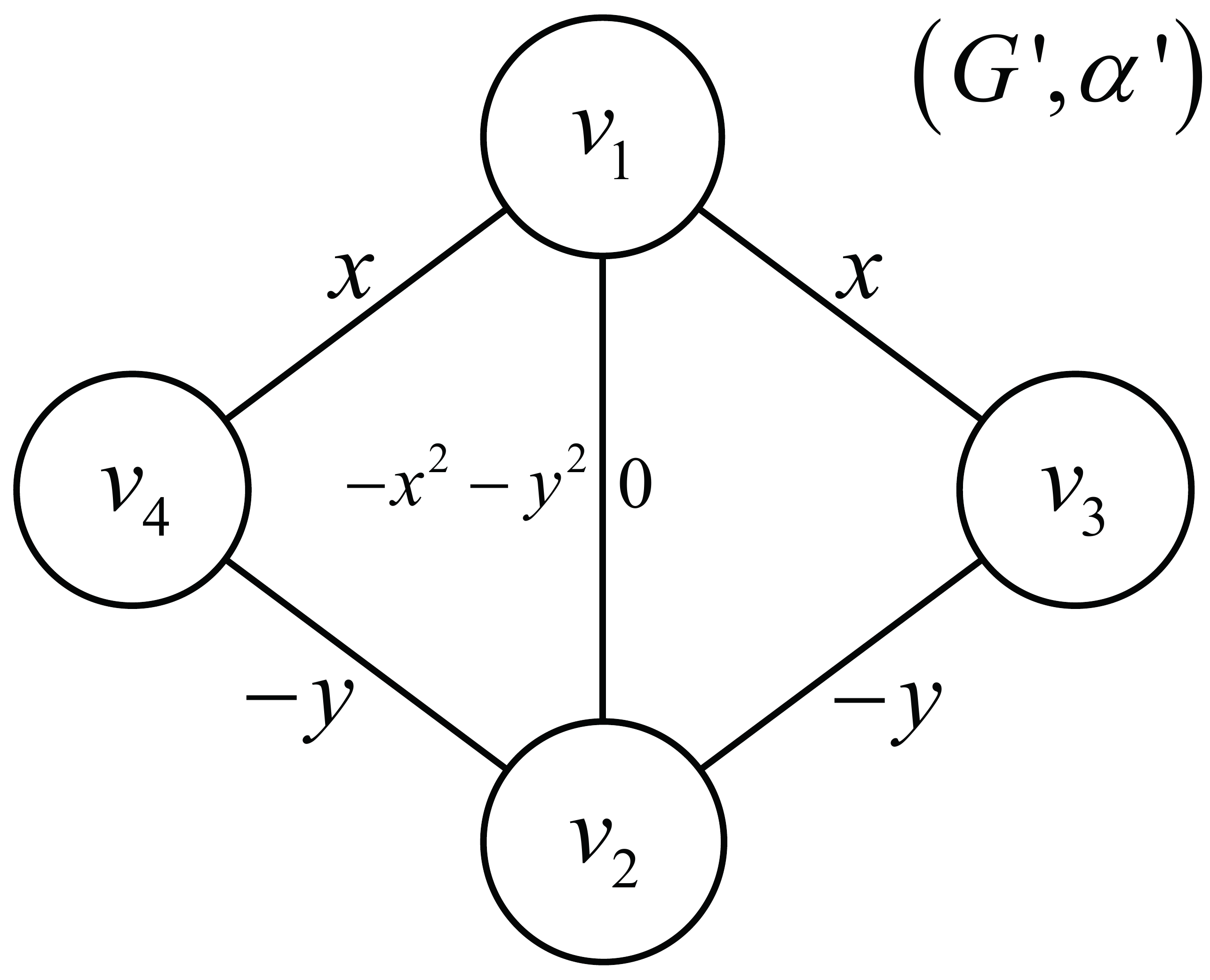}}
\caption{Removing an interior edge}
\label{d332}
\end{center}
\end{figure}
\noindent Finally we split off the right-side $3$-cycle as follows:
\begin{figure}[H]
\begin{center}
\scalebox{0.16}{\includegraphics{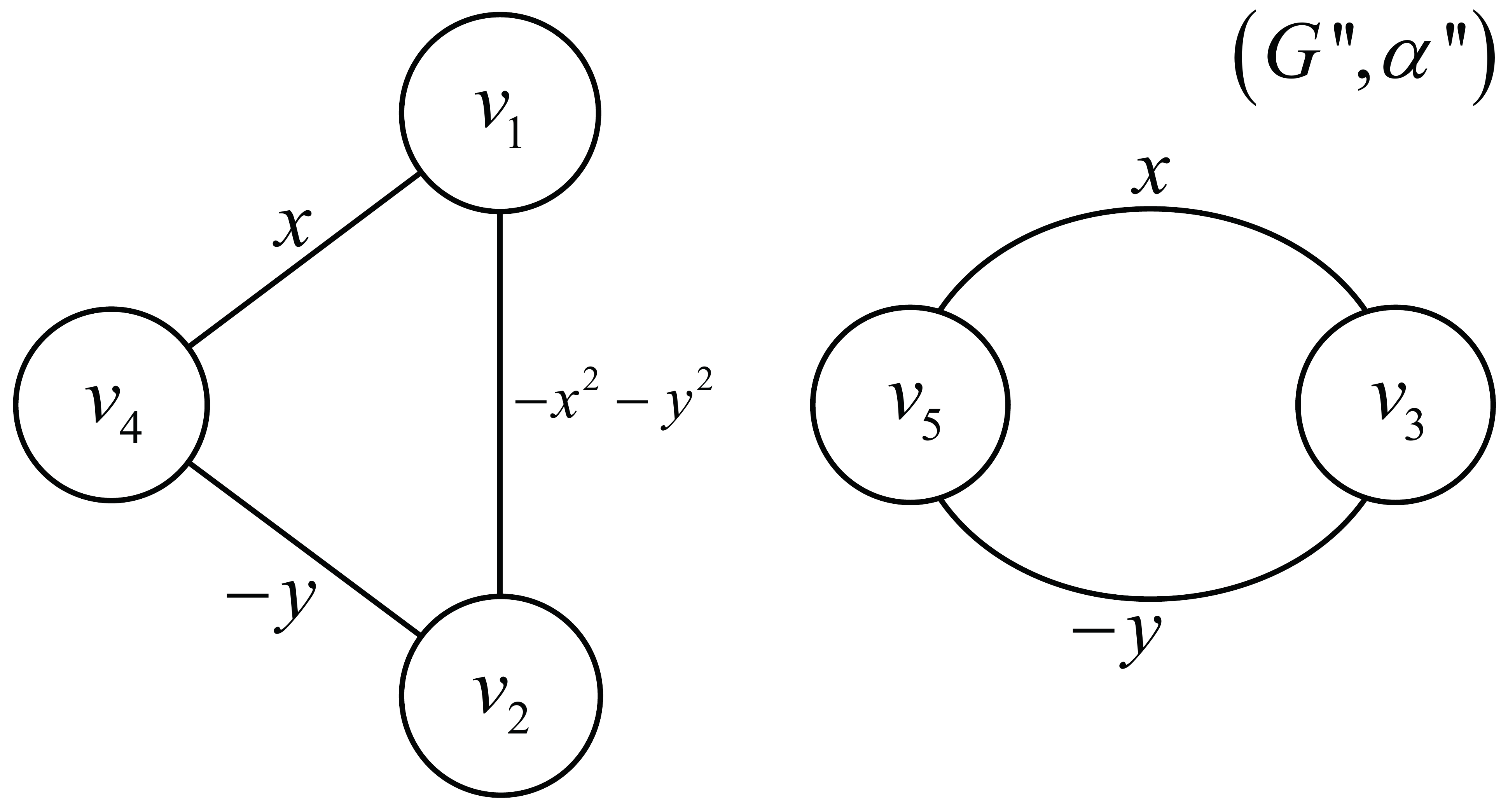}}
\caption{Splitting off cycles}
\label{d333}
\end{center}
\end{figure}
\label{exd33}
\end{ex}

The following theorem shows that the decomposition operations do not affect the isomorphism class of the module $B_{(G,\alpha)}$. Theorem~\ref{dectheo} and Corollary~\ref{nointerior} are proved in~\cite{Rose} for classical spline case. However, the proofs are identical for generalized splines.

\begin{theo} If $G$ decomposes into $G^{\prime\prime}$, then
\begin{displaymath}
B_{(G,\alpha)} \cong B_{(G^{\prime\prime},\alpha^{\prime\prime})}
\end{displaymath}
as $R$-modules.
\label{dectheo}
\end{theo}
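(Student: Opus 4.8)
The plan is to recognize $B_{(G,\alpha)}$ as the kernel of the presentation matrix $A$ and to realize each edge removal as right multiplication by an invertible matrix. Recall that $A$ has rows indexed by the minimal cycles $\mathcal{B}$ and columns by the edges $E$, with entry $\pm l_e$ exactly when $e$ lies on the given cycle, so that $B_{(G,\alpha)} = \ker A$. By the definition of decomposition, $G'$ arises from $G$ by a finite sequence of removals of removable interior edges, and $G''$ has the \emph{same} matrix as $G'$; hence $B_{(G',\alpha')} = B_{(G'',\alpha'')}$ is immediate (identical kernels). Thus it suffices to prove that a single removal $G \rightsquigarrow G'$ preserves the isomorphism class of the kernel, and then compose the resulting isomorphisms along the sequence.

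For the single step, suppose $e_i$ is removable from the cycle $C$, so that $l_i = \sum_{e} c_e\, l_e$ where the sum runs over the exterior edges $e$ of $C$ and $c_e \in R$. Algebraically the removal replaces the $(C,e_i)$ entry $\epsilon_i l_i$ of $A$ by $0$, producing $A'$, and changes nothing else. The key observation is that each exterior edge $e$ of $C$ lies on only one cycle, so column $e$ of $A$ is supported entirely in row $C$. I would then perform the column operation that subtracts from column $e_i$ the combination $\sum_e \lambda_e\,(\text{column } e)$ with $\lambda_e = \epsilon_i\epsilon_e c_e$, where $\epsilon_i,\epsilon_e \in \{\pm 1\}$ are the signs recorded in row $C$. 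In row $C$ this turns $\epsilon_i l_i$ into $\epsilon_i l_i - \sum_e \epsilon_i\epsilon_e c_e\,(\epsilon_e l_e) = \epsilon_i l_i - \epsilon_i\sum_e c_e l_e = 0$, precisely the required change, while in every other row the subtracted columns vanish, so no other entry of $A$ moves. Therefore $A' = AV$ with $V = I + N$, where $N$ has nonzero entries only in column $e_i$; since $N^2 = 0$ one gets $V^{-1} = I - N$, so $V$ is invertible over $R$.

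With $A' = AV$ and $V$ invertible, the map $x \mapsto V^{-1}x$ carries $\ker A$ isomorphically onto $\ker A'$: indeed $A'(V^{-1}x) = Ax$, so $V^{-1}$ sends $\ker A$ into $\ker A'$ and its inverse $V$ sends $\ker A'$ back. This yields $B_{(G,\alpha)} = \ker A \cong \ker A' = B_{(G',\alpha')}$, and composing over the finitely many removals, together with $B_{(G',\alpha')} = B_{(G'',\alpha'')}$, gives the theorem. I expect the only delicate point to be bookkeeping rather than conceptual: one must verify that the exterior edges occurring in $l_i = \sum_e c_e l_e$ really have columns supported solely in row $C$ (this is exactly the definition of \emph{exterior}) and that the removal alters a single matrix entry, so that the elementary column operation reproduces $A'$ exactly. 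Keeping the orientation signs $\epsilon_e$ consistent along $C$ is the main place an error could enter; the remaining content is the standard fact that right multiplication by a unit of $\mathrm{GL}(R^{|E|})$ induces an isomorphism of kernels.
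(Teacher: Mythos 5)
Your proof is correct and follows essentially the same route as the paper's source: the paper itself only cites Rose (Theorem 4.7 of~\cite{Rose}) rather than giving a proof, but the unipotent change of coordinates $V^{-1}=I-N$ you construct is exactly the isomorphism the paper exhibits explicitly in Example~\ref{syzygyex} (the map $(r_1,\ldots,r_5)\mapsto(r_1,r_2,r_3+yr_5,r_4-xr_5,r_5)$). Your key observation --- that columns of exterior edges of $C$ are supported only in row $C$, so the column operation kills the $(C,e_i)$ entry without disturbing any other row --- is precisely the point that makes the argument work, and your handling of the signs and of $N^2=0$ is sound.
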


\begin{proof} See Theorem 4.7 in~\cite{Rose}.
\end{proof}

\begin{ex} Consider the edge labeled graphs given in Figures~\ref{d331},~\ref{d332} and~\ref{d333}. Recall that the module $B_{(G , \alpha)}$ is represented as the kernel of the following matrix
\begin{displaymath}
A = \begin{pmatrix} x & -y & 0 & 0 & -x^2 - y^2 \\ 0 & 0 & -y & x & -x^2 - y^2
\end{pmatrix}.
\end{displaymath}
Since $B_{(G^{\prime} , \alpha^{\prime})}$ and $B_{(G^{\prime\prime} , \alpha^{\prime\prime})}$ are both given by the following matrix:
\begin{displaymath}
A^{\prime} = \begin{pmatrix} x & -y & 0 & 0 & -x^2 -y^2 \\ 0 & 0 & -y & x & 0
\end{pmatrix},
\end{displaymath} where the matrix $A^{\prime}$ corresponds to the graph $G^{\prime} $ obtained by removing the edge $e_5$ from the right-side  3-cycle of $(D_{3,3},\alpha)$ , simply setting the label of $e_5$ to be 0 in Figure~\ref{d331}, they are isomorphic. Now we consider
a map between $B_{(G , \alpha)}$ and $B_{(G^{\prime} , \alpha^{\prime})}$  defined by
\begin{align*}
\phi: B_{(G,\alpha)} &\to B_{(G^{\prime}, \alpha^{\prime})} \\
(r_1 , r_2 , r_3 , r_4 , r_5) &= (r_1 , r_2 , r_3 + yr_5 , r_4 - xr_5 , r_5).
\end{align*}
It can be easily observed that $\phi$ is an $R$-module isomorphism so that $B_{(G,\alpha)} \cong B_{(G^{\prime\prime} , \alpha^{\prime\prime})}$. Alternatively, $G$ decomposes into $G^{\prime}$ and finally into two disjoint cycles in $G^{\prime\prime}$. Hence by Theorem~\ref{dectheo}, we obtain the same result.
\label{syzygyex}
\end{ex}

\begin{cor} If $G^{\prime\prime}$ is a decomposition of $G$ into disjoint cycles $C_1 , \ldots , C_s$ and $p$ free edges, then
\begin{displaymath}
B_{(G,\alpha)} \cong \bigoplus \limits_{\substack{i=1}} ^s B_{(C_i,\alpha_i)} \oplus R^p.
\end{displaymath}
as $R$-modules.
\label{nointerior}
\end{cor}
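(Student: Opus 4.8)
The plan is to combine Theorem~\ref{dectheo} with a direct analysis of the syzygy matrix of a disjoint union. First I would invoke the hypothesis that $G$ decomposes into $G^{\prime\prime}$ and apply Theorem~\ref{dectheo} to obtain $B_{(G,\alpha)} \cong B_{(G^{\prime\prime},\alpha^{\prime\prime})}$ as $R$-modules. This reduces the entire problem to computing $B_{(G^{\prime\prime},\alpha^{\prime\prime})}$ for the graph $G^{\prime\prime}$, which is by assumption the disjoint union of the cycles $C_1, \ldots, C_s$ together with $p$ free edges. So it suffices to establish the splitting for this especially simple graph.

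Next I would describe the matrix $A^{\prime\prime}$ presenting $B_{(G^{\prime\prime},\alpha^{\prime\prime})}$ as a kernel, using the construction recalled before Theorem~\ref{dectheo}: the rows are indexed by the minimal cycles of $G^{\prime\prime}$ and the columns by its edges. Since the $C_i$ are pairwise disjoint simple cycles, each $C_i$ is itself a minimal cycle and these are the only cycles present, so the cycle space basis is exactly $\mathcal{B} = \{C_1, \ldots, C_s\}$. Moreover, each free edge lies in no cycle, hence its column in $A^{\prime\prime}$ is identically zero. Ordering the edges block by block (the edges of $C_1$, then those of $C_2$, and so on, followed by the $p$ free edges), the matrix $A^{\prime\prime}$ becomes block diagonal, the $i$-th block being the single-row matrix $A_{C_i}$ that presents $B_{(C_i,\alpha_i)}$, with $p$ additional zero columns appended.

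From this shape the kernel splits at once. Under the identification $R^{|E(G^{\prime\prime})|} = \bigoplus_{i=1}^s R^{|E(C_i)|} \oplus R^p$, a tuple lies in $\ker A^{\prime\prime}$ precisely when $\sum_{e \in C_i} \pm\alpha(e) r_e = 0$ for each $i$; these $s$ conditions involve disjoint sets of coordinates, while the $p$ coordinates attached to the free edges are subject to no condition. Hence
\[
B_{(G^{\prime\prime},\alpha^{\prime\prime})} = \ker A^{\prime\prime} = \bigoplus_{i=1}^s \ker A_{C_i} \oplus R^p = \bigoplus_{i=1}^s B_{(C_i,\alpha_i)} \oplus R^p,
\]
and composing with the isomorphism from the first step yields the claim.

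The argument is essentially bookkeeping, so the only points requiring care are structural: confirming that the cycle space basis of a disjoint union of cycles consists of exactly those cycles, so that no spurious rows appear in $A^{\prime\prime}$, and verifying that each free edge contributes exactly one free summand $R$. I expect this to be the mildest of obstacles, since both facts follow directly from disjointness. I would also emphasize that the displayed splitting is an \emph{honest} internal direct sum of submodules of $R^{|E(G^{\prime\prime})|}$ — the canonical decomposition of the solution set of independent families of linear conditions — and not merely an abstract isomorphism, which is what makes the identification with the individual $B_{(C_i,\alpha_i)}$ and the free part $R^p$ unambiguous.
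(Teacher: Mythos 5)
Your argument is correct. The paper itself gives no proof of this corollary, deferring to Corollary 4.9 of~\cite{Rose}; your two-step argument --- reduce to $G^{\prime\prime}$ via Theorem~\ref{dectheo}, then observe that the presenting matrix of a disjoint union of cycles and free edges is block diagonal (one row per cycle, a zero column per free edge), so its kernel splits as $\bigoplus_i \ker A_{C_i} \oplus R^p$ --- is exactly the intended argument and matches the cited source. The structural points you flag (that the cycle space basis of $G^{\prime\prime}$ is precisely $\{C_1,\ldots,C_s\}$, and that each free edge contributes an unconstrained coordinate) are indeed the only things to check, and you check them correctly.
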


\begin{proof} See Corollary 4.9 in~\cite{Rose}.
\end{proof}

Corolloary~\ref{nointerior} shows that, if $G$ has no interior edges then $B_{(G,\alpha)}$ is isomorphic to the direct sum of the syzygy modules of cycles in $G$ and the number of free edges copy of $R$.

In case all edge labels of graph $G$ are homogeneous polynomials of the same degree, each isomorphism in Theorem~\ref{mmodule}, Theorem~\ref{dectheo} and Corollary~\ref{nointerior} is a graded $R$-module isomorphism. In this case, we can talk about the Hilbert series of the syzygy module $B_{(G,\alpha)}$.

\begin{prop} Let $(G,\alpha)$ be an edge labeled graph such that all edge labels are homogeneous with the same degree $r$. Then $R_{(G,\alpha)} \cong R \oplus B_{(G,\alpha)}$ as graded $R$-modules with a degree shift in $B_{(G,\alpha)}$ of $r$.
\label{degshift}
\end{prop}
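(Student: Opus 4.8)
The plan is to put compatible gradings on both sides of the known ungraded isomorphism $R_{(G,\alpha)}\cong R\oplus B_{(G,\alpha)}$ and then check that this isomorphism becomes homogeneous once the syzygy factor is shifted by $r$. First I would regard $R_{(G,\alpha)}$ as a submodule of $R^{|V|}$ and $B_{(G,\alpha)}$ as a submodule of $R^{|E|}$, each with the grading inherited from the standard grading of $R$, and argue that both are \emph{graded} submodules. For $R_{(G,\alpha)}$ this follows because every edge ideal $\langle l_e\rangle$ is homogeneous: if $f=(f_v)_v$ is a spline with homogeneous decomposition $f=\sum_m f^{(m)}$, where $f^{(m)}=(f_v^{(m)})_v$, then for each edge $e=uv$ the degree-$m$ component of $f_u-f_v\in\langle l_e\rangle$ is $f_u^{(m)}-f_v^{(m)}$, which again lies in the homogeneous ideal $\langle l_e\rangle$; hence each $f^{(m)}$ is a spline. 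For $B_{(G,\alpha)}$ the point is that, since every $l_e$ is homogeneous of the \emph{same} degree $r$, each cycle relation $\sum_{e\in C}\pm r_e l_e=0$ splits degreewise: its degree-$t$ part is $\sum_{e\in C}\pm r_e^{(t-r)}l_e$, so vanishing of the full sum forces vanishing of each graded piece, and the homogeneous components of a syzygy are again syzygies.

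Next I would make the underlying isomorphism explicit and track degrees. By Theorem~\ref{mmodule} a spline decomposes as $f=f_v\mathbf{1}+f^v$ with $f^v\in M_v$; since $\mathbf{1}$ is homogeneous of degree $0$, this decomposition respects homogeneous components, so $R_{(G,\alpha)}\cong R\mathbf{1}\oplus M_v$ is a graded splitting with $R\mathbf{1}\cong R$ carrying no shift. The identification $M_v\cong B_{(G,\alpha)}$ sends $f^v$ to the tuple $(r_e)_{e\in E}$ determined by $r_e l_e=f_u-f_w$ for each oriented edge $e=uw$, the quotient being well defined because $R$ is a domain and $l_e\mid(f_u-f_w)$. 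If $f^v$ is homogeneous of degree $m$, then each $f_u-f_w$ is homogeneous of degree $m$ and $l_e$ is homogeneous of degree $r$, so every $r_e$ is homogeneous of degree $m-r$. Thus this isomorphism carries the degree-$m$ part of $M_v$ onto the degree-$(m-r)$ part of $B_{(G,\alpha)}$, that is, it is homogeneous of degree $0$ precisely after shifting the grading of $B_{(G,\alpha)}$ up by $r$.

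Combining the two steps yields the graded isomorphism $R_{(G,\alpha)}\cong R\oplus B_{(G,\alpha)}$ in which the degree-$m$ summand on the right is $R_m\oplus (B_{(G,\alpha)})_{m-r}$, which is exactly the asserted degree shift of $r$ in the $B_{(G,\alpha)}$ factor. There is no deep obstacle here: the content of the statement is bookkeeping of degrees, and the only place that genuinely needs the common-degree hypothesis is making the shift uniform; if the $l_e$ had distinct degrees the cycle relations would still be homogeneous for a suitably twisted grading on $R^{|E|}$, but no single scalar shift would then describe the resulting graded isomorphism. The step most worth stating with care is the degree computation $\deg r_e=m-r$, since it simultaneously pins down the size of the shift and confirms that the previously established ungraded isomorphism is genuinely homogeneous.
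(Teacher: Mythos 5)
Your proposal is correct. Note that the paper offers no argument of its own for Proposition~\ref{degshift} --- it simply cites Theorem 2.2 of~\cite{Ros1} --- and your degree bookkeeping is exactly the content of that reference: the graded splitting $R_{(G,\alpha)}\cong R\mathbf{1}\oplus M_v$ via the degree-$0$ spline $\mathbf{1}$, followed by the observation that the identification $f^v\mapsto (r_e)$ with $r_e l_e = f_u-f_w$ lowers degree by exactly $r$ because all $l_e$ share that degree. The one computation worth keeping explicit is precisely the one you single out, $\deg r_e = m-r$, since it both produces the shift and matches the Hilbert series check the paper performs afterward (multiplication of $\mathcal{HS}(B_{(G,\alpha)})$ by $t^r$).
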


\begin{proof} See Theorem 2.2 in~\cite{Ros1}.
\end{proof}

\begin{prop} If $G$ decomposes into disjoint cycles $C_1 , \ldots , C_s$ and $p$ free edges, then
\begin{displaymath}
\mathcal{HS}(B_{(G,\alpha)}) = \mathcal{HS}(B_{(C_1,\alpha_1)}) + \cdots + \mathcal{HS}(B_{(C_s,\alpha_s)}) + \dfrac{p}{(1-t)^d}
\end{displaymath}
where $\mathcal{HS}(M)$ denotes the Hilbert series of $M$.
\label{hs}
\end{prop}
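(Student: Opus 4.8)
The plan is to deduce the formula directly from the structural decomposition in Corollary~\ref{nointerior} together with the additivity of the Hilbert series. Since the left-hand side is a Hilbert series, I work in the graded setting in which all edge labels of $G$ are homogeneous of one common degree; this is the implicit hypothesis that makes $B_{(G,\alpha)}$ a graded module. First I would invoke Corollary~\ref{nointerior} to obtain the $R$-module isomorphism
\begin{displaymath}
B_{(G,\alpha)} \cong \bigoplus_{i=1}^s B_{(C_i,\alpha_i)} \oplus R^p,
\end{displaymath}
and then upgrade it to a graded isomorphism of degree $0$ using the remark following that corollary: in the common-degree case each removability relation $l_i = \sum g_e l_e$ forces the coefficients $g_e$ to be constants, so the maps $\phi$ realizing the interior-edge removals are homogeneous of degree $0$, and the composite isomorphism preserves the coordinate grading.

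With a graded isomorphism in hand, I would apply additivity of the Hilbert series over finite direct sums: for graded $R$-modules one has $(M \oplus N)_j = M_j \oplus N_j$ in each degree $j$, whence $\mathcal{HS}(M \oplus N) = \mathcal{HS}(M) + \mathcal{HS}(N)$. Applied to the decomposition above this gives
\begin{displaymath}
\mathcal{HS}(B_{(G,\alpha)}) = \sum_{i=1}^s \mathcal{HS}(B_{(C_i,\alpha_i)}) + \mathcal{HS}(R^p).
\end{displaymath}

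It then remains to evaluate the free-edge contribution. Each free edge yields one unshifted copy of the base ring $R = k[x_1,\ldots,x_d]$, and the polynomial ring in $d$ variables has the classical Hilbert series
\begin{displaymath}
\mathcal{HS}(R) = \sum_{j \geq 0} \binom{j+d-1}{d-1} t^j = \frac{1}{(1-t)^d},
\end{displaymath}
so that $\mathcal{HS}(R^p) = p\,\mathcal{HS}(R) = \dfrac{p}{(1-t)^d}$. Substituting this into the previous display produces exactly the asserted identity. The argument is short, and I expect the only point demanding genuine care to be the verification that the decomposition isomorphism is graded of degree $0$, with no degree shift introduced on the free-edge summands; once that is settled, additivity and the standard Hilbert series of a polynomial ring finish the proof immediately.
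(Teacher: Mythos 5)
Your proof is correct and matches the paper's approach: the paper proves this proposition simply by citing Corollary 4.9 of Rose, which is exactly the argument you give --- the direct-sum decomposition of Corollary~\ref{nointerior}, upgraded to a graded isomorphism in the common-degree setting (as the paper notes in the paragraph preceding Proposition~\ref{degshift}), followed by additivity of Hilbert series and $\mathcal{HS}(R^p) = p/(1-t)^d$. Your observation that the removability coefficients can be taken to be constants, so the decomposition maps are degree~$0$, is the right justification for the graded step.
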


\begin{proof} See Corollary 4.9 in~\cite{Rose}.
\end{proof}

\begin{ex} The edge labeled graph $(G,\alpha)$ in Figure~\ref{d334} decomposes into disjoint cycles $C_1$ and $C_2$  in Figure~\ref{d335}.
\newpage
\begin{figure}
\centering
\begin{minipage}{.5\textwidth}
  \centering
  \scalebox{0.16}{\includegraphics{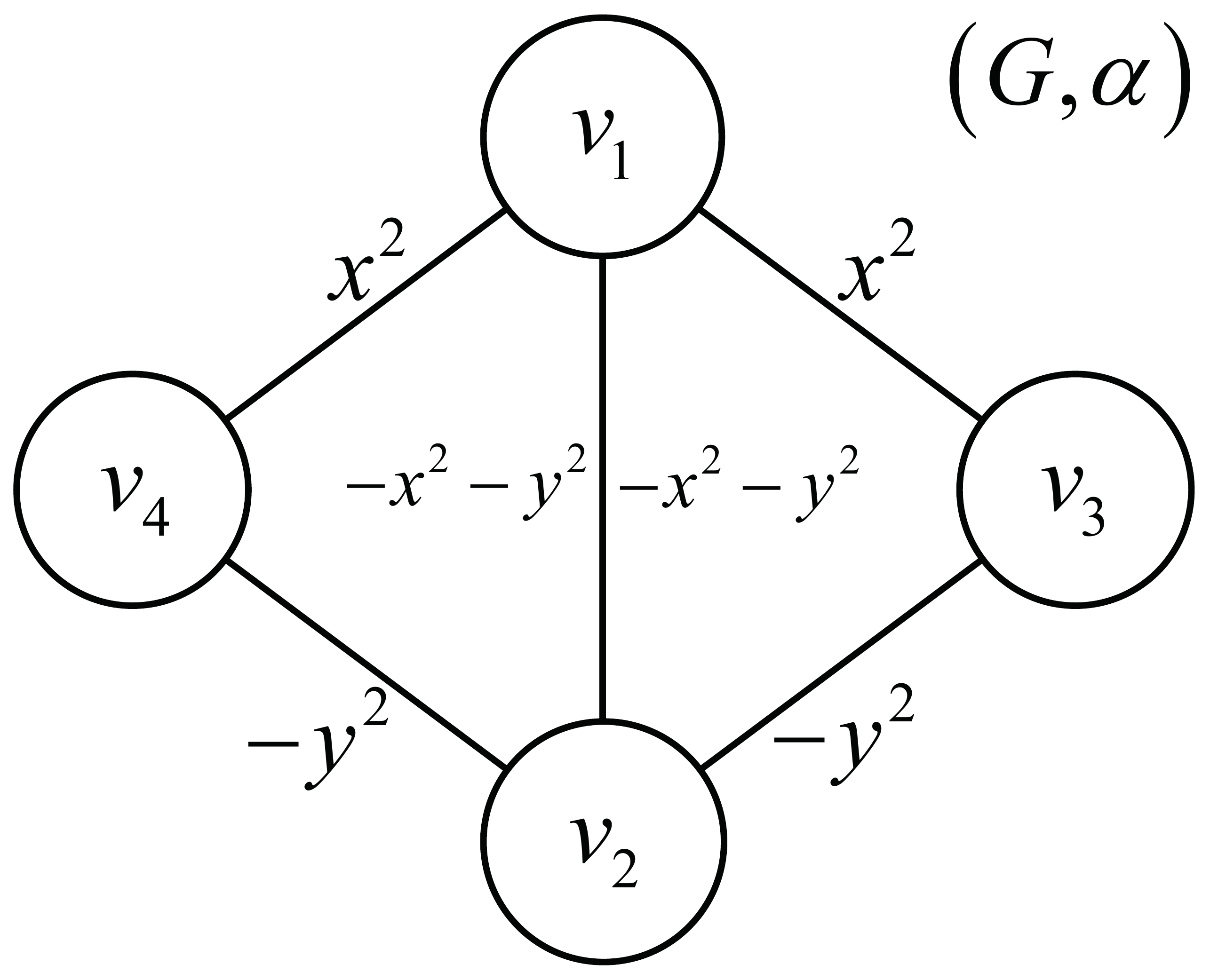}}
  \caption{Edge labeled $D_{3,3}$}
  \label{d334}
\end{minipage}%
\begin{minipage}{.5\textwidth}
  \centering
  \scalebox{0.16}{\includegraphics{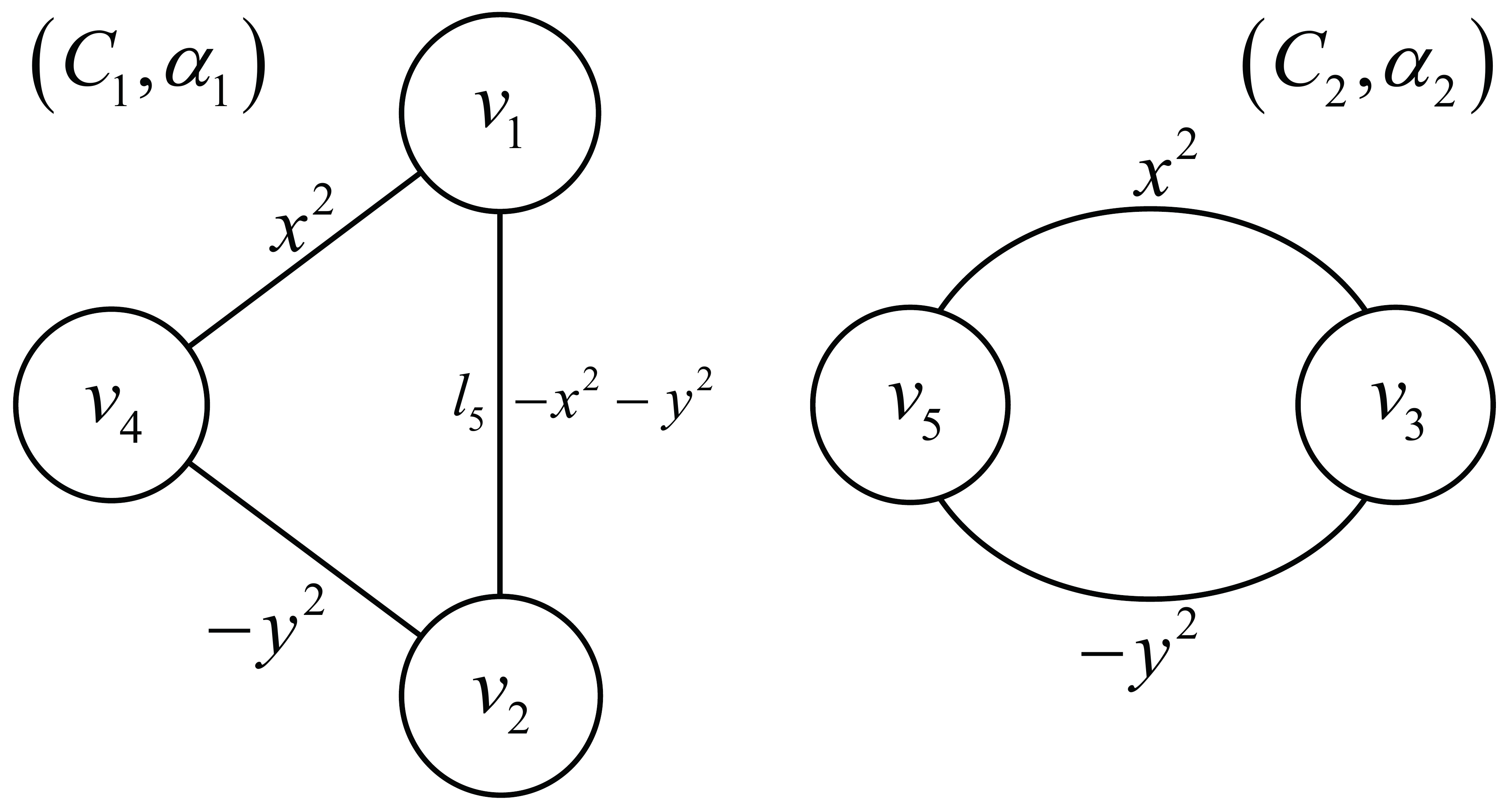}}
  \caption{Decomposition of $D_{3,3}$}
  \label{d335}
\end{minipage}
\end{figure}

\noindent Here we have 
\begin{align*}
B_{(G , \alpha)} &= \big\langle (y^2 , x^2 , 0,0,0) , (0,0,x^2,y^2,0) , (1,-1,-1,1,1) \big\rangle, \\ 
B_{(C_1 , \alpha_1)} &= \big\langle (y^2 , x^2 , 0) , (1,-1,1) \big\rangle, \\
B_{(C_2 , \alpha_2)} &= \big\langle (x^2 , y^2) \big\rangle. 
\end{align*}
It follows that the Hilbert series of these syzygy modules are given by
\begin{gather*}
\mathcal{HS}(B_{(G,\alpha)}) =\dfrac{1+2t^2}{(1-t)^2}, \quad
\mathcal{HS}(B_{(C_1,\alpha_1)}) =\dfrac{1+t^2}{(1-t)^2}, \quad
\mathcal{HS}(B_{(C_2,\alpha_2)}) =\dfrac{t^2}{(1-t)^2}.
\end{gather*} 
We conclude that $\mathcal{HS}(B_{(G,\alpha)}) = \mathcal{HS}(B_{(C_1,\alpha_1)}) + \mathcal{HS}(B_{(C_2,\alpha_2)})$, in agreement with  Proposition~\ref{hs}. In particular,
\begin{displaymath}
\mathcal{HS}(R_{(G,\alpha)}) = \dfrac{t^2 + 2t^4}{(1-t)^2} + \dfrac{1}{(1-t)^2}
\end{displaymath}
with a degree shift in $B_{(G,\alpha)}$ of $2$ by Proposition~\ref{degshift}.
\end{ex}

More details on graded generalized spline modules can be found in~\cite{Alt}.

\section{Freeness of the Spline Module $R_{(G,\alpha)}$}
\label{freeness}

Let the base ring $R = k[x_1 , \ldots , x_d]$ and $(C_n,\alpha)$ be an edge labeled cycle with edge labels $\{ l_1 , \ldots , l_n \}$. Fix the ideal $I = \langle l_1 , \ldots , l_n \rangle$. Then there exists a canonical short exact sequence as follows:
\begin{equation}
0 \to I \to R \to \nicefrac{R}{I} \to 0
\label{ses1}
\end{equation}
and by Theorem~\ref{pdim}, we obtain $\text{pd } I = \text{pd } \nicefrac{R}{I} - 1$ if $\text{pd } \nicefrac{R}{I} > 0$.

Another short exact sequence can be given as follows:
\begin{equation}
0 \to B_{(C_n , \alpha)} \xrightarrow{i} R^n \xrightarrow{f} I \to 0
\label{ses2}
\end{equation}
where $i$ is the inclusion map and $f : R^n \to I$ defined by $f(r_1 , \ldots , r_n) = \sum\limits_{\substack{t = 1}} ^n r_t l_t$. From this short exact sequence, we conclude that $\text{pd } B_{(C_n , \alpha)} = \text{pd } I - 1$ by Theorem~\ref{pdim} if $\text{pd } I > 0$. Combining two results obtained from (\ref{ses1}) and (\ref{ses2}), we have
\begin{equation}
\text{pd } B_{(C_n , \alpha)} = \text{pd } \nicefrac{R}{I} - 2.
\label{ses3}
\end{equation}
This observation leads us to the following result.

\begin{prop} Let $(C_n , \alpha)$ be an edge labeled cycle and the base ring $R$ be the bivariate polynomial ring $k[x,y]$. Then $R_{(C_n , \alpha)}$ is a free $R$-module.
\label{cycleprop}
\end{prop}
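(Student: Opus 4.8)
The plan is to show that $R_{(C_n,\alpha)}$ is a free $k[x,y]$-module by proving it is projective and then invoking the Quillen--Suslin theorem, since every finitely generated projective module over a polynomial ring is free. By Theorem~\ref{mmodule} together with the identification $M_v \cong B_{(C_n,\alpha)}$, we have $R_{(C_n,\alpha)} \cong R\mathbf{1} \oplus B_{(C_n,\alpha)}$ as $R$-modules, so $R_{(C_n,\alpha)}$ is free precisely when $B_{(C_n,\alpha)}$ is free. Thus the whole problem reduces to analyzing the projective dimension of $B_{(C_n,\alpha)}$, and the key input is the chain of short exact sequences (\ref{ses1}) and (\ref{ses2}) leading to the identity (\ref{ses3}), namely $\text{pd } B_{(C_n,\alpha)} = \text{pd } \nicefrac{R}{I} - 2$ whenever the intermediate projective dimensions are positive.

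First I would recall that over $R = k[x,y]$ the Hilbert Syzygy Theorem (Theorem~\ref{hst}) gives $\text{pd } \nicefrac{R}{I} \leq 2$ for every finitely generated module, in particular for $\nicefrac{R}{I}$. The strategy is then to argue by cases on the value of $\text{pd } \nicefrac{R}{I} \in \{0,1,2\}$. When $\text{pd } \nicefrac{R}{I} = 2$, the identity (\ref{ses3}) applies directly and yields $\text{pd } B_{(C_n,\alpha)} = 0$, so $B_{(C_n,\alpha)}$ is projective. When $\text{pd } \nicefrac{R}{I} \leq 1$, the formula (\ref{ses3}) is not literally available because the derivations of (\ref{ses1}) and (\ref{ses2}) via Theorem~\ref{pdim} require positive projective dimension at each stage; here one must instead reason directly. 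For instance, if $\text{pd } \nicefrac{R}{I} = 1$ then from (\ref{ses1}) we get $\text{pd } I = 0$, so $I$ is projective, and then (\ref{ses2}) exhibits $B_{(C_n,\alpha)}$ as the kernel of a surjection from a free module onto a projective module, a sequence which splits by Proposition~\ref{projmod} a), making $B_{(C_n,\alpha)}$ a direct summand of $R^n$ and hence projective. If $\text{pd } \nicefrac{R}{I} = 0$, then $\nicefrac{R}{I}$ is projective, the sequence (\ref{ses1}) splits, and a parallel splitting argument again shows $B_{(C_n,\alpha)}$ is projective.

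Once projectivity of $B_{(C_n,\alpha)}$ is established in every case, I would note that $B_{(C_n,\alpha)}$ is finitely generated (it is a submodule of the Noetherian module $R^n$), so by Quillen--Suslin it is in fact free. Then $R_{(C_n,\alpha)} \cong R\mathbf{1} \oplus B_{(C_n,\alpha)} \cong R \oplus B_{(C_n,\alpha)}$ is a direct sum of free modules, hence free, completing the proof.

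The main obstacle I anticipate is handling the boundary cases $\text{pd }\nicefrac{R}{I} \leq 1$ cleanly, since the convenient telescoping identity (\ref{ses3}) was derived under the standing hypothesis that the relevant projective dimensions are strictly positive and thus does not immediately cover them. The careful point is to verify that the splitting arguments invoked via Proposition~\ref{projmod} genuinely apply: one needs $I$ (respectively $\nicefrac{R}{I}$) to be projective, and one must confirm that the surjections in (\ref{ses1}) and (\ref{ses2}) have the projective module as the right-hand term so that the sequence splits and the kernel becomes a direct summand of a free module. In practice all three cases funnel into the single conclusion that $B_{(C_n,\alpha)}$ has projective dimension zero, but making the low-dimension cases rigorous rather than appealing to a formula valid only for positive projective dimension is where the real care is required.
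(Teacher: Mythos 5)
Your proposal is correct and follows essentially the same route as the paper's own proof: both reduce to showing $B_{(C_n,\alpha)}$ is projective via the short exact sequences (\ref{ses1}) and (\ref{ses2}), use the Hilbert Syzygy Theorem bound $\text{pd }\nicefrac{R}{I}\leq 2$ together with the identity (\ref{ses3}) in the top case, handle the low projective dimension cases by splitting arguments, and conclude by Quillen--Suslin. Your case division by the value of $\text{pd }\nicefrac{R}{I}\in\{0,1,2\}$ is only a cosmetic reorganization of the paper's cases, and your explicit remark that $B_{(C_n,\alpha)}$ is finitely generated is a small but welcome addition.
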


\begin{proof} Fix the ideal $I = \langle l_1 , \ldots l_n \rangle$ generated by the edge labels on $(C_n , \alpha)$. If $\text{pd } \nicefrac{R}{I} > 0$ and $\text{pd } I > 0$, then
\begin{displaymath}
\text{pd } B_{(C_n , \alpha)} = \text{pd } \nicefrac{R}{I} - 2 \leq 2-2 = 0
\end{displaymath}
since $\text{pd } \nicefrac{R}{I} \leq 2$ by the Hilbert Syzygy Theorem. Hence we conclude that $\text{pd } B_{(C_n , \alpha)} = 0$ and therefore $B_{(C_n , \alpha)}$ is a projective $R$-module. Since $R_{(C_n,\alpha)} \cong B_{(C_n,\alpha)} \oplus R$, the spline module $R_{(C_n,\alpha)}$ is also projective. By Quillen-Suslin Theorem, $R_{(C_n,\alpha)}$ is a free $R$-module.

If $\text{pd } \nicefrac{R}{I} = 0$, then $\nicefrac{R}{I}$ is a projective $R$-module and hence the short exact sequence~(\ref{ses1}) splits by Theorem~\ref{projmod} (a), which means $R \cong I \oplus \nicefrac{R}{I}$. Here $I$ is also projective $R$-module by Theorem~\ref{projmod2} and the short exact sequence~(\ref{ses2}) splits so that $R^n \cong B_{(C_n , \alpha)} \oplus I$. Therefore $B_{(C_n , \alpha)}$ is projective by Theorem~\ref{projmod2}. Since $R_{(C_n,\alpha)} \cong B_{(C_n,\alpha)} \oplus R$, the spline module $R_{(C_n,\alpha)}$ is also projective. By Quillen-Suslin Theorem, $R_{(C_n,\alpha)}$ is a free $R$-module.

If $\text{pd } I = 0$, then $I$ and $B_{(C_n , \alpha)}$ are projective $R$-modules as explained above. Hence $R_{(C_n,\alpha)}$ is also projective and so it is a free $R$-module.
\end{proof}

We can generalize Proposition~\ref{cycleprop} to graphs that contain only one cycle as follows:

\begin{cor} Let $(G , \alpha)$ be an edge labeled graph and $R$ be the bivariate polynomial ring $k[x,y]$. If $G$ contains only one cycle, then $R_{(G , \alpha)}$ is a free $R$-module.
\label{cor1}
\end{cor}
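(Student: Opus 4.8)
The plan is to reduce the statement to the single-cycle case already settled in Proposition~\ref{cycleprop}. First I would make the structural observation that a graph containing only one cycle $C$ automatically has no interior edges: by definition an interior edge lies in at least two cycles, which is impossible when $G$ has a unique cycle. Hence every edge of $G$ not belonging to $C$ is a free edge, and $G$ is already its own decomposition into the single cycle $C$ together with $p$ free edges, where $p = |E| - |E(C)|$; no actual edge removals are needed.

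With this in hand, I would invoke Corollary~\ref{nointerior} in the case $s = 1$ to obtain the $R$-module isomorphism $B_{(G,\alpha)} \cong B_{(C,\alpha_C)} \oplus R^p$. This isolates the only nontrivial summand, the syzygy module of the cycle $C$.

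The next step is to upgrade Proposition~\ref{cycleprop} from a statement about $R_{(C,\alpha_C)}$ to one about $B_{(C,\alpha_C)}$. Since $R_{(C,\alpha_C)} \cong R\mathbf{1} \oplus B_{(C,\alpha_C)}$ is free by Proposition~\ref{cycleprop} and Theorem~\ref{mmodule}, the summand $B_{(C,\alpha_C)}$ is a direct summand of a free $R$-module, hence projective by Proposition~\ref{projmod}~(b). Because $R^p$ is projective as well, Proposition~\ref{projmod2} shows that $B_{(G,\alpha)} \cong B_{(C,\alpha_C)} \oplus R^p$ is projective, and therefore free by the Quillen--Suslin theorem. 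Finally, $R_{(G,\alpha)} \cong R\mathbf{1} \oplus B_{(G,\alpha)}$ by Theorem~\ref{mmodule}, which exhibits $R_{(G,\alpha)}$ as a direct sum of free modules and hence as a free $R$-module.

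I expect no serious obstacle: the argument is essentially bookkeeping layered on top of Proposition~\ref{cycleprop}, together with the stability of freeness under finite direct sums and Quillen--Suslin. The one point requiring genuine care is the first step, namely verifying that ``only one cycle'' forces ``no interior edges'' so that Corollary~\ref{nointerior} applies verbatim with the cycle already split off; once that observation is secured, the rest is immediate.
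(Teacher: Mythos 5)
Your proposal is correct and follows essentially the same route as the paper: apply Corollary~\ref{nointerior} to write $B_{(G,\alpha)} \cong B_{(C,\alpha')} \oplus R^p$, deduce projectivity of the cycle's syzygy module from Proposition~\ref{cycleprop}, and conclude freeness via Quillen--Suslin. Your extra care in noting that a unique cycle forces the absence of interior edges, and in extracting projectivity of $B_{(C,\alpha')}$ as a direct summand of the free module $R_{(C,\alpha')}$, only makes explicit what the paper leaves implicit.
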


\begin{proof} Let $C_n \subset G$ be the only cycle contained in $G$ with edge labels $\{ l_1 , \ldots l_n \}$. Then $B_{(G , \alpha)} \cong B_{(C_n , \alpha^{\prime})} \oplus R^p$ by Corollary~\ref{nointerior} where $p$ is the number of the free edges of $G$. Here $B_{(C_n , \alpha^{\prime})}$ is projective by Proposition~\ref{cycleprop} and so is $B_{(G , \alpha)}$. Hence $R_{(G , \alpha)}$ is also projective and it is free.
\end{proof}

Another generalization of Proposition~\ref{cycleprop} can be given as follows:

\begin{cor} Let $(G , \alpha)$ be an edge labeled graph with no interior edges and $R$ be the bivariate polynomial ring $k[x,y]$. Then $R_{(G , \alpha)}$ is a free $R$-module.
\label{cor2}
\end{cor}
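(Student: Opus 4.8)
The plan is to reduce everything to the single–cycle case already settled in Proposition~\ref{cycleprop} and then reassemble the pieces with the direct–sum machinery from Section~\ref{projdim}. First I would exploit the hypothesis that $G$ has no interior edges: by definition this means every edge of $G$ lies in at most one minimal cycle, so the minimal cycles $C_1, \ldots, C_s$ of $G$ are edge-disjoint and every remaining edge is free. Consequently no removal of interior edges is required, and $G$ itself already plays the role of the decomposed graph $G^{\prime\prime}$ in Corollary~\ref{nointerior}. Applying that corollary directly yields
\begin{displaymath}
B_{(G,\alpha)} \cong \bigoplus_{i=1}^s B_{(C_i,\alpha_i)} \oplus R^p,
\end{displaymath}
where $p$ is the number of free edges of $G$.

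The next step is to observe that each summand is projective over $R = k[x,y]$. For the cyclic summands this is precisely what the proof of Proposition~\ref{cycleprop} establishes: in every case treated there one first shows that $B_{(C_i,\alpha_i)}$ is a projective $R$-module, and only afterwards concludes freeness via Quillen--Suslin. The free summand $R^p$ is trivially projective. Since a finite direct sum of projective modules is again projective by Proposition~\ref{projmod2}, it follows that $B_{(G,\alpha)}$ is projective.

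Finally, I would invoke Theorem~\ref{mmodule} together with the identification $M_v \cong B_{(G,\alpha)}$ discussed after it to write $R_{(G,\alpha)} \cong B_{(G,\alpha)} \oplus R$, which is then projective, once more by Proposition~\ref{projmod2}. As $R_{(G,\alpha)}$ is finitely generated and projective over the polynomial ring $k[x,y]$, the Quillen--Suslin Theorem delivers that it is free.

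I do not anticipate a genuine obstacle here, since the argument is essentially a repackaging of earlier results; the only point deserving care is the very first one, namely verifying that the absence of interior edges really permits Corollary~\ref{nointerior} to be applied with no decomposition steps at all. Concretely, one must check that the minimal cycles are edge-disjoint, so that (up to a permutation of columns) the presentation matrix of $B_{(G,\alpha)}$ is block diagonal with one block per cycle and zero columns for the free edges, whence the syzygy module splits as the stated direct sum. Everything after that is the formal transport of projectivity through Proposition~\ref{projmod2} and the appeal to Quillen--Suslin.
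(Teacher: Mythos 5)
Your proposal is correct and follows essentially the same route as the paper: apply Corollary~\ref{nointerior} to split $B_{(G,\alpha)}$ into cycle syzygy modules plus a free summand, invoke Proposition~\ref{cycleprop} (via its projectivity content) for each cycle, and finish with Proposition~\ref{projmod2} and Quillen--Suslin. Your extra care in checking that the absence of interior edges makes the minimal cycles edge-disjoint, so that no decomposition step is actually needed, is a welcome elaboration of the paper's terser ``we can assume $G$ consists of disjoint cycles and free edges,'' but it is the same argument.
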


\begin{proof} If $G$ has no interior edge, then we can assume that $G$ consists of disjoint cycles $C_1 , \ldots , C_s$ and $p$ free edges. Then $B_{(G,\alpha)} \cong \bigoplus \limits_{\substack{i=1}} ^s B_{(C_i,\alpha_i)} \oplus R^p$ by Corollary~\ref{nointerior}. Here $B_{(C_i , \alpha_i)}$ is projective for all $i$ by Proposition~\ref{cycleprop} and hence $B_{(G,\alpha)}$ is also projective. Thus $R_{(G,\alpha)}$ is free.
\end{proof}

If $G$ has interior edges, then $R_{(G,\alpha)}$ may not be free even the base ring is $k[x,y]$. We consider the following example:

\begin{ex} Let $(G, \alpha)$ be the diamond graph as in the Figure~\ref{diamond33}.
\begin{figure}[H]
\begin{center}
\scalebox{0.16}{\includegraphics{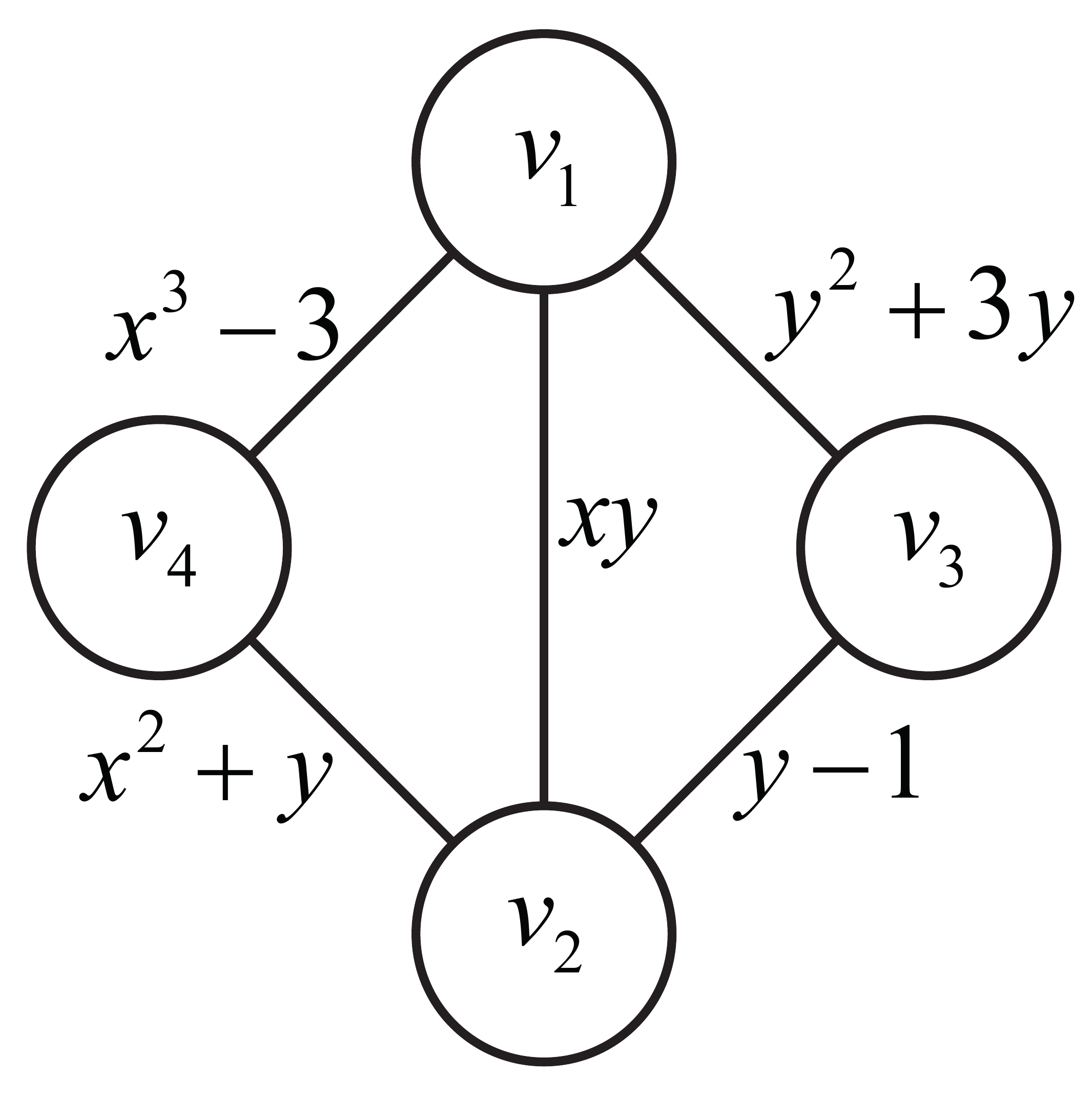}}
\caption{Edge labeled diamond graph}
\label{diamond33}
\end{center}
\end{figure}
\noindent
In this example $e_{12}$ is an interior edge which is not removable. Computations on CoCoA 4.7.5~\cite{Abb} show that the spline module $R_{(G,\alpha)}$ is not free.
\end{ex}

As a result of Corollary~\ref{nointerior}, we obtain the following outcome:

\begin{cor} Let $R=k[x,y]$. If  $(G , \alpha)$ decomposes into disjoint cycles with $p$ free edges, then $R_{(G , \alpha)}$ is a free $R$-module.
\label{cor3}
\end{cor}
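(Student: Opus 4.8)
The plan is to reduce this statement to the already-established building blocks, namely Corollary~\ref{nointerior} together with Proposition~\ref{cycleprop}. The hypothesis is that $(G,\alpha)$ \emph{decomposes} into disjoint cycles $C_1,\ldots,C_s$ and $p$ free edges, which by Definition is exactly the situation in which $G$ is connected to some $G''$ by a sequence of removals of interior edges, where $G''$ is the disjoint union of the $C_i$ and the free edges. First I would invoke Corollary~\ref{nointerior} to obtain the module isomorphism
\begin{displaymath}
B_{(G,\alpha)} \cong \bigoplus_{i=1}^{s} B_{(C_i,\alpha_i)} \oplus R^p
\end{displaymath}
as $R$-modules, where $R=k[x,y]$. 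This is the structural input that converts the global freeness question into $s$ local ones on individual cycles.

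Next I would handle each summand. By Proposition~\ref{cycleprop}, for every cycle $(C_i,\alpha_i)$ over the bivariate ring $k[x,y]$ the spline module $R_{(C_i,\alpha_i)}$ is free, and since $R_{(C_i,\alpha_i)} \cong B_{(C_i,\alpha_i)} \oplus R$, each $B_{(C_i,\alpha_i)}$ is itself projective (indeed free, being a summand of a free module over $k[x,y]$). The free summand $R^p$ is trivially projective. Then I would apply Proposition~\ref{projmod2} (a direct sum is projective precisely when each summand is) to conclude that $B_{(G,\alpha)}$ is projective. Finally, using $R_{(G,\alpha)} \cong R\textbf{1} \oplus B_{(G,\alpha)}$ from Theorem~\ref{mmodule}, the module $R_{(G,\alpha)}$ is projective as a direct sum of projectives, and the Quillen--Suslin Theorem upgrades this to freeness over the polynomial ring $k[x,y]$.

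The argument is essentially a bookkeeping assembly of prior results, so I do not expect a genuine obstacle; the only point requiring a little care is making sure the hypothesis ``decomposes into disjoint cycles with $p$ free edges'' is literally the hypothesis of Corollary~\ref{nointerior}, so that the isomorphism applies verbatim rather than only after an appeal to Theorem~\ref{dectheo}. Since the corollary is already stated for exactly this decomposition, the transition is immediate. The mild subtlety worth noting is that projectivity alone does not give freeness in general, so the invocation of Quillen--Suslin at the end is the step that genuinely uses that the base ring is a polynomial ring rather than an arbitrary commutative ring; this is the same final move used in Proposition~\ref{cycleprop} and its corollaries, and the present statement is in effect their common generalization.
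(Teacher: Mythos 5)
Your argument is correct and is essentially the paper's own: the paper states Corollary~\ref{cor3} as an immediate consequence of Corollary~\ref{nointerior}, with the freeness of each cycle summand supplied by Proposition~\ref{cycleprop} exactly as in the proof of Corollary~\ref{cor2}. Your assembly via Proposition~\ref{projmod2} and Quillen--Suslin matches that reasoning step for step.
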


\begin{ex} Consider the diamond graph $G$ in Figure~\ref{d331}, discussed in Example~\ref{exd33}. This graph decomposes into two disjoint cycles as in Figure~\ref{d333} and hence $R_{(G , \alpha)}$ is free by Corollary~\ref{cor3}.
\end{ex}

Proposition~\ref{cycleprop} holds also for cycles of rank at most  two on $R=k[x_1, \ldots , x_d]$. In this case, we use Theorem~\ref{rkthm} to prove the proposition below: 

\begin{prop} Let $(C_n , \alpha)$ be an edge labeled cycle and the base ring $R$ be the polynomial ring $k[x_1, \ldots , x_d]$. If $\emph{rk } C_n \leq 2$, then $R_{(C_n , \alpha)}$ is a free $R$-module.
\label{cycleprop2}
\end{prop}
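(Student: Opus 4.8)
The plan is to follow the same strategy as in the proof of Proposition~\ref{cycleprop}, with the crucial difference that the bound on $\text{pd } \nicefrac{R}{I}$ now comes from the rank hypothesis rather than from the Hilbert Syzygy Theorem. First I would fix the ideal $I = \langle l_1 , \ldots , l_n \rangle$ generated by the edge labels of $(C_n , \alpha)$. Since $\text{rk } C_n \leq 2$, Theorem~\ref{rkthm} immediately gives $\text{pd } \nicefrac{R}{I} \leq \text{rk } C_n \leq 2$. This is exactly the input that is missing over $k[x_1 , \ldots , x_d]$ for $d > 2$, where the Hilbert Syzygy Theorem only yields $\text{pd } \nicefrac{R}{I} \leq d$; the whole point of the proposition is that the rank-two assumption compensates for the extra variables.

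With this bound in hand I would run the same case analysis as in Proposition~\ref{cycleprop}, using the canonical short exact sequences (\ref{ses1}) and (\ref{ses2}) and their consequence (\ref{ses3}), namely $\text{pd } B_{(C_n , \alpha)} = \text{pd } \nicefrac{R}{I} - 2$, valid when $\text{pd } \nicefrac{R}{I} > 0$ and $\text{pd } I > 0$. In that case $\text{pd } \nicefrac{R}{I} \leq 2$ forces $\text{pd } B_{(C_n , \alpha)} \leq 0$, so $B_{(C_n , \alpha)}$ is projective. For the degenerate cases: if $\text{pd } \nicefrac{R}{I} = 0$ then $\nicefrac{R}{I}$ is projective, so (\ref{ses1}) splits, making $I$ a direct summand of $R$ and hence projective, after which (\ref{ses2}) splits and $B_{(C_n , \alpha)}$ is a direct summand of $R^n$, hence projective; if $\text{pd } I = 0$ the splitting of (\ref{ses2}) again shows $B_{(C_n , \alpha)}$ is projective. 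In every branch $B_{(C_n , \alpha)}$ comes out projective. Finally, from $R_{(C_n , \alpha)} \cong B_{(C_n , \alpha)} \oplus R$ the module $R_{(C_n , \alpha)}$ is projective and finitely generated over a polynomial ring, so Quillen-Suslin gives freeness.

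I do not expect a genuine obstacle in the reduction itself, since it reproduces the bivariate argument verbatim once the projective-dimension bound is available. The real content has already been absorbed into Theorem~\ref{rkthm}, and the only point requiring care is to confirm that the three projective-dimension reductions are invoked under their correct hypotheses (in particular that $\text{pd } I = \text{pd } \nicefrac{R}{I} - 1$ and $\text{pd } B_{(C_n , \alpha)} = \text{pd } I - 1$ are only applied when the relevant projective dimension is positive). Because $\text{rk } C_n \leq 2$ is independent of $d$, the conclusion holds over $k[x_1 , \ldots , x_d]$ for every $d$, which is precisely the generalization of Proposition~\ref{cycleprop} being claimed.
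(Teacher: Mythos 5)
Your proposal is correct and follows essentially the same route as the paper: the paper's proof likewise fixes $I = \langle l_1 , \ldots , l_n \rangle$, invokes Theorem~\ref{rkthm} to get $\text{pd } \nicefrac{R}{I} \leq \text{rk } C_n \leq 2$, and then observes that the argument of Proposition~\ref{cycleprop} applies verbatim. Your write-up simply spells out the case analysis that the paper leaves implicit by citing Proposition~\ref{cycleprop}.
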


\begin{proof} Fix the ideal $I = \langle l_1 , \ldots l_n \rangle$ generated by the edge labels on $(C_n , \alpha)$. Here $\text{pd } \nicefrac{R}{I} \leq \text{rk } C_n \leq 2$ by Theorem~\ref{rkthm} and thus $R_{(C_n , \alpha)}$ is free by Proposition~\ref{cycleprop}.
\end{proof}

As a result of Proposition~\ref{cycleprop2}, Corollaries~\ref{cor1},~\ref{cor2} and  ~\ref{cor3} can be generalized as follows.

\begin{cor} Let $R$ be the polynomial ring $k[x_1, \ldots , x_d]$.
\begin{enumerate}[label=\emph{(\alph*)}]
	\item If $G$ contains only one cycle $C_n$ and $\emph{rk } C_n \leq 2$, then $R_{(G , \alpha)}$ is a free $R$-module.
	\item If $G$ decomposes into disjoint cycles $C_1 , \ldots , C_s$ and $p$ free edges where $\emph{rk } C_i \leq 2$ for all $i$, then $R_{(G , \alpha)}$ is a free $R$-module.
\end{enumerate}
\label{cor4}
\end{cor}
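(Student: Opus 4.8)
The plan is to reduce both parts to the single-cycle result Proposition~\ref{cycleprop2} by means of the decomposition machinery of Section~\ref{decomp}, in exact parallel with the way Corollaries~\ref{cor1},~\ref{cor2} and~\ref{cor3} were deduced from Proposition~\ref{cycleprop} over $k[x,y]$. The essential work has already been absorbed into Proposition~\ref{cycleprop2}: there the hypothesis $\text{rk } C_i \leq 2$ forces $\text{pd } \nicefrac{R}{I_i} \leq 2$ via Theorem~\ref{rkthm}, which is precisely the bound the projective-dimension argument of Proposition~\ref{cycleprop} requires, even though the Hilbert Syzygy Theorem only bounds projective dimension by $d$ over $k[x_1,\ldots,x_d]$. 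So what remains is purely the bookkeeping of transporting projectivity across the decomposition isomorphisms.

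For part (a), I would first note that if $G$ contains exactly one cycle $C_n$, then no edge lies in two or more cycles, so $G$ has no interior edges and already consists of one cycle together with its free edges. Corollary~\ref{nointerior} then gives $B_{(G,\alpha)} \cong B_{(C_n,\alpha^\prime)} \oplus R^p$, where $p$ is the number of free edges. Since $\text{rk } C_n \leq 2$, Proposition~\ref{cycleprop2} shows $R_{(C_n,\alpha)}$ is free; as $R_{(C_n,\alpha)} \cong B_{(C_n,\alpha^\prime)} \oplus R$, the summand $B_{(C_n,\alpha^\prime)}$ is projective by Proposition~\ref{projmod2}. A second application of Proposition~\ref{projmod2} makes $B_{(G,\alpha)}$ projective, whence $R_{(G,\alpha)} \cong R \oplus B_{(G,\alpha)}$ is projective and therefore free by the Quillen--Suslin Theorem.

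For part (b) the argument is identical in spirit, only with several cycles. By Corollary~\ref{nointerior} the decomposition of $G$ into disjoint cycles and free edges yields $B_{(G,\alpha)} \cong \bigoplus_{i=1}^s B_{(C_i,\alpha_i)} \oplus R^p$. Each $B_{(C_i,\alpha_i)}$ is projective by Proposition~\ref{cycleprop2} together with Proposition~\ref{projmod2}, using the hypothesis $\text{rk } C_i \leq 2$ for every $i$. Proposition~\ref{projmod2} then shows the entire direct sum $B_{(G,\alpha)}$ is projective, so $R_{(G,\alpha)}$ is projective and hence free by Quillen--Suslin.

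I do not expect a genuine obstacle here, since all the analytic difficulty has been discharged in Proposition~\ref{cycleprop2} and Corollary~\ref{nointerior}. The only point demanding care is the passage from the freeness of the \emph{spline} module $R_{(C_i,\alpha_i)}$ to the projectivity of the \emph{syzygy} summand $B_{(C_i,\alpha_i)}$; this is handled cleanly by the splitting $R_{(C_i,\alpha_i)} \cong B_{(C_i,\alpha_i)} \oplus R$ and the stability of projectivity under direct summands and direct sums recorded in Proposition~\ref{projmod2}.
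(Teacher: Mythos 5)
Your proposal is correct and follows essentially the same route the paper takes: the paper deduces Corollary~\ref{cor4} from Proposition~\ref{cycleprop2} exactly as Corollaries~\ref{cor1}--\ref{cor3} were deduced from Proposition~\ref{cycleprop}, namely by splitting $B_{(G,\alpha)}$ via Corollary~\ref{nointerior} and transporting projectivity through the direct sum before invoking Quillen--Suslin. Your extra care in extracting projectivity of the syzygy summand from the splitting $R_{(C_i,\alpha_i)} \cong B_{(C_i,\alpha_i)} \oplus R$ is a slightly more explicit rendering of a step the paper leaves implicit, but it is not a different argument.
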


The following example shows that  the converse of Corollary~\ref{cor4} is false.

\begin{ex} Consider the edge labeled $3$-cycle $G_1$ over $R=k[x,y,z]$ in Figure~\ref{rk1}. Let $I= \langle x^3 + yz^2 , x^2 + y^2 , xz^2 + y^3 \rangle$ be an ideal of $R$. By using Macaulay2, we obtain $\text{pd } \big( \nicefrac{R}{I} \big) = 2$. It follows that  the syzygy module is free by (\ref{ses3}) and thus $R_{(G_1,\alpha)}$ is free although $\text{rk } G_1 = 3$.
\label{rankex}
\end{ex}

\end{document}